\newcommand{\bA}{\mathbb{A}}
\newcommand{\bC}{\mathbb{C}}
\newcommand{\bF}{\mathbb{F}}
\newcommand{\bP}{\mathbb{P}}
\newcommand{\bQ}{\mathbb{Q}}
\newcommand{\bZ}{\mathbb{Z}}
\newcommand{\cP}{\mathcal{P}}
\newcommand{\sL}{\mathscr{L}}
\newcommand{\sO}{\mathscr{O}}
\newcommand{\sR}{\mathscr{R}}
\DeclareMathOperator{\Base}{Base}		
\DeclareMathOperator{\Char}{char}		
\DeclareMathOperator{\Cl}{Cl}			
\DeclareMathOperator{\codim}{codim}	
\DeclareMathOperator{\Gal}{Gal}		
\DeclareMathOperator{\Gr}{Gr}			
\DeclareMathOperator{\NS}{NS}		
\DeclareMathOperator{\Proj}{Proj}		
\DeclareSymbolFont{cyrletters}{OT2}{wncyr}{m}{n}
\DeclareMathSymbol{\Sha}{\mathalpha}{cyrletters}{"58}	
\DeclareMathOperator{\Spec}{Spec}		
\newcommand{\ce}{\colonequals}
\newcommand{\Div}{{\mathrm{Div}}}		
\newcommand{\hra}{\hookrightarrow}
\renewcommand{\i}{^{-1}}
\newcommand{\isomto}{\overset{\sim}{\longrightarrow}}
\newcommand{\ov}{\overline}
\providecommand{\p}[1]{\left(#1\right)}
\newcommand{\ra}{\rightarrow}
\newcommand{\red}{{\mathrm{red}}}		
\newcommand{\sm}{\mathrm{sm}}			
\newcommand{\tensor}{\otimes} 			
\newcommand{\wt}{\widetilde}
\providecommand{\abs}[1]{\left\lvert#1\right\rvert}
\providecommand{\uref}[1]{{\upshape\ref{#1}}}
\providecommand{\uS}{{\upshape\S}}
\providecommand{\f}[2]{\frac{#1}{#2}}
\renewcommand{\b}{\textbf}
\providecommand{\uscolon}{{\upshape;} }
\newcommand{\brems}{\begin{rems} \hfill \begin{enumerate}[label=\b{\thenumberingbase.},ref=\thenumberingbase]}
\newcommand{\erems}{\end{enumerate} \end{rems}}
\newcommand{\begs}{\begin{egs} \hfill \begin{enumerate}[label=\b{\thenumberingbase.},ref=\thenumberingbase]}
\newcommand{\eegs}{\end{enumerate} \end{egs}}
\newcommand{\m}{\item}
\newcommand{\bsm}{\begin{smallmatrix}}
\newcommand{\esm}{\end{smallmatrix}}
\newcommand{\blem}{\begin{lemma}}
\newcommand{\elem}{\end{lemma}}
\newcommand{\bconj}{\begin{conj}}
\newcommand{\econj}{\end{conj}}
\newcommand{\bprob}{\begin{Problem}}
\newcommand{\eprob}{\end{Problem}}
\newcommand{\bq}{\begin{Q}}
\newcommand{\eq}{\end{Q}}
\newcommand{\benum}{\begin{enumerate}[label={{\upshape(\alph*)}}]}
\newcommand{\benuma}{\begin{enumerate}[label={{\upshape(\arabic*)}}]}
\newcommand{\benumr}{\begin{enumerate}[label={{\upshape(\roman*)}}]}
\newcommand{\eenum}{\end{enumerate}}
\newcommand{\bitem}{\begin{itemize}}
\newcommand{\eitem}{\end{itemize}}
\newcommand{\bc}{}
\newcommand{\bd}{\begin{defn}}
\newcommand{\ed}{\end{defn}}
\newcommand{\beg}{\begin{eg}}
\newcommand{\eeg}{\end{eg}}
\newcommand{\bcl}{\begin{claim}}
\newcommand{\ecl}{\end{claim}}
\newcommand{\msk}{\medskip}
\newcommand{\x}{\text}
\providecommand{\qxq}[1]{\quad\text{#1}\quad}
\providecommand{\qx}[1]{\quad\text{#1}}
\newcommand{\tst}{\textstyle}
\newcommand{\ba}{\begin{aligned}}
\newcommand{\ea}{\end{aligned}}
\newcommand{\be}{\begin{equation}}
\newcommand{\ee}{\end{equation}}
\newcommand{\bpf}{\begin{proof}}
\newcommand{\epf}{\end{proof}}
\newcommand{\bthm}{\begin{thm}}
\newcommand{\ethm}{\end{thm}}
\newcommand{\bprop}{\begin{prop}}
\newcommand{\eprop}{\end{prop}}
\newcommand{\bcor}{\begin{cor}}
\newcommand{\ecor}{\end{cor}}
\newcommand{\brem}{\begin{rem}}
\newcommand{\erem}{\end{rem}}
\date{Avril 2021}
\title{Reconstructing a variety from its topology}
\author{K\k{e}stutis \v{C}esnavi\v{c}ius}
\address{CNRS, Universit\'{e} Paris-Saclay, \\  Laboratoire de math\'{e}matiques d'Orsay, \\ F-91405, Orsay, France}
\email{kestutis@math.u-psud.fr}
\begin{document}

\maketitle

\section*{Introduction}

The underlying Zariski topological space $\abs{X}$ of an algebraic variety or, more generally, a scheme $X$ tends to have few open subsets in comparison to topologies that underlie structures appearing in differential geometry or geometric topology. Thus, intuitively, $\abs{X}$ is a weak invariant of $X$, and this intuition is confirmed by low-dimensional examples: for an algebraic curve $C$, the proper closed subsets of $\abs{C}$ are the finite subsets of closed points, so $\abs{C}$ does not see much beyond the cardinality of the algebraic closure of the base field. A more surprising example was constructed by Wiegand and Krauter \cite{WK81}*{Cor.~1}: for primes $p$ and $p'$, there is a homeomorphism
\[
|\bP^2_{\ov{\bF}_p}| \simeq |\bP^2_{\ov{\bF}_{p'}}|.
\]

Topological spaces that underlie schemes (resp.,~affine schemes) were, in fact, completely classified by Hochster \cite{Hoc69}*{Thm.~9}: they are the locally spectral (resp.,~the spectral) topological spaces. We recall that a topological space $T$ is \emph{spectral} if
\begin{itemize}
\m
it is quasi-compact and quasi-separated;

\m
it is \emph{sober}: each irreducible closed $T' \subset T$ is the closure $\ov{\{t\}}$ of a unique $t \in T'$;

\m
the quasi-compact open subsets form a base of the topology of $T$.
\end{itemize}
A topological space $T$ is \emph{locally spectral} if it has an open cover by spectral spaces. The topological space $\abs{X}$ of a quasi-compact and quasi-separated scheme $X$ is spectral, so Hochster's result implies that, somewhat surprisingly, $\abs{X}$ also underlies some affine scheme. For instance, for any field $k$ and any $n \ge 0$, the topological space $\abs{\bP^n_k}$ also underlies an affine scheme (which, of course,  need not be a variety over a field).

Due to the above, the recent result of Koll\'{a}r \cite{Kol20}, which is the focus of this report, came as a surprise: a projective, irreducible, normal variety $X$ over $\bC$ of dimension $\ge 4$ is uniquely determined by its topological space $\abs{X}$, see Theorem \ref{thm:main} below. A resulting general expectation in this direction is captured by the following conjecture of Koll\'{a}r.

\begin{conj}[\cite{Kol20}*{Conj.~3}] \label{main-conj}
For seminormal, geometrically irreducible varieties $X$ and $X'$ over fields $k$ and $k'$, respectively, with $\Char k = 0$ and $\dim X \ge 2$, every homeomorphism $\abs{X} \isomto \abs{X'}$ underlies a scheme isomorphism $X \isomto X'$.

\end{conj}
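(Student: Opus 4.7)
The plan is to reconstruct $X'$ from $\abs{X'}$ as a scheme using only the topological data, and then to verify that any homeomorphism $\abs{X} \isomto \abs{X'}$ is forced to respect the reconstructed structure. First I would reduce to the case where $k$ and $k'$ are algebraically closed: since every homeomorphism must carry closed points to closed points (characterized topologically as points whose closure is a singleton), and these correspond to the geometric closed points once one passes to the algebraic closure via the unique base extension of a geometrically irreducible variety, the problem can be rephrased on $X_{\ov{k}}$ and $X'_{\ov{k'}}$. Passing to projective completions (using that every quasi-projective variety admits a projective seminormal model, and that $\abs{X}$ determines the set of "points at infinity" in a suitable compactification) would allow me to assume $X$ and $X'$ projective.

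Next, following the Lieblich--Olsson--Koll\'{a}r philosophy, I would recover geometric data step by step from $\abs{X}$. The closed points and the irreducible closed subvarieties of each dimension are recovered directly, together with the partial order of specialization. The key step is to recover enough "linear" structure on curves to reconstruct the ground field: for each sufficiently movable irreducible curve $C \subset X$, the pencils through $C$ form a topologically visible family, and the fundamental theorem of projective geometry then allows one to endow a dense collar of each generic curve with the structure of $\bP^1_k$. This in turn pins down $k$, hence reconstructs the structure sheaf on a dense open, and then globally by normality/seminormality.

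To descend from Koll\'{a}r's hypothesis $\dim X \ge 4$ to $\dim X \ge 2$, I would attempt a Lefschetz-type induction: choose a topologically characterized family of ample divisors $D \subset X$, verify that for a generic such $D$ the restriction $\abs{D}$ is determined by $\abs{X}$ together with $D$, and apply the conjecture inductively in lower dimension. Handling seminormality rather than normality requires comparing the seminormalizations of $X$ and $X'$, which are determined by the underlying topological space together with the sheaf of germs of continuous functions that are regular on the normalization; one checks that seminormal varieties are recoverable from this sheaf, which itself is recoverable from the incidence data of curves.

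The main obstacle is the dimension-two case. When $\dim X = 2$, ample divisors are curves, where the conjecture is false, so the Lefschetz induction cannot be started in the form used for $\dim \ge 4$. One must instead directly reconstruct the field from the incidence data of \emph{pencils of curves} on the surface; the analogue of the fundamental theorem of projective geometry has to be proved by combinatorial analysis of how closed points lie on the generic members of such pencils. A secondary obstacle is the passage from $\bC$ to arbitrary characteristic-zero fields $k$: without uncountability of the residue field one loses density arguments of the type "any two distinct closed points lie on a common irreducible curve in a prescribed linear system," so the combinatorial reconstruction of field structure must be made uniform in $k$. These two difficulties are, I expect, precisely the reasons the statement remains a conjecture.
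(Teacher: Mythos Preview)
The statement you are attempting is Conjecture~\ref{main-conj}, which is \emph{open}: the paper contains no proof of it. What the paper does prove is the special case recorded as Theorem~\ref{thm:main} (normal, projective, $\dim X \ge 4$ in characteristic $0$, or $\dim X \ge 3$ over fields finitely generated over $\bQ$), via the strategy laid out in \S\ref{sec:master}--\S\ref{sec:recover-X}. There is thus no proof in the paper against which to compare your proposal, and you yourself acknowledge in your final paragraph that the statement remains conjectural.

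Even as a strategy, several of your reductions fail. The reduction to algebraically closed base fields does not work: a homeomorphism $\abs{X} \isomto \abs{X'}$ induces no map at all between $\abs{X_{\ov{k}}}$ and $\abs{X'_{\ov{k'}}}$, because closed points of $X$ are Galois orbits of geometric points and nothing in the topological data tells you how to split those orbits compatibly on the two sides. (The argument for Theorem~\ref{thm:main} stays over the given field throughout and invokes $\ov{k}$ only for auxiliary inputs such as Lemma~\ref{lem:RS}.) Your passage to projective completions is likewise unfounded: the conjecture does not assume quasi-projectivity, and even when a projective closure exists, $\abs{X}$ alone does not determine any particular boundary---distinct compactifications of the same variety have non-homeomorphic complements. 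Finally, the Lefschetz induction you describe cannot extend the known range downward: restricting to an ample divisor lowers the dimension, so knowing the conjecture for $D$ of dimension $n-1$ would at best help with $X$ of dimension $n$, not the reverse; and as you observe, there is no base case since the statement is false for curves. These are genuine obstructions to the outline as written, not merely technicalities.
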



\section{Reconstruction of projective varieties} \label{sec:master}


 


The following result of Koll\'{a}r builds on previous work of Lieblich and Olsson and fully resolves Conjecture \ref{main-conj} for projective, normal varieties of dimension $\ge 4$ in characteristic $0$. In fact, it forms the foundation of credibility for a conjecture of this sort.

\begin{theo}[\cite{Kol20}*{Thm.~2}] \label{thm:main}
For normal, geometrically integral, projective varieties $X$ and $X'$ over fields $k$ and $k'$, respectively, such that either
\benumr
\m \label{main-i}
$\dim X \ge 4$ and both $k$ and $k'$ are of characteristic $0$\uscolon or

\m \label{main-ii}
$\dim X \ge 3$ and both $k$ and $k'$ are finitely generated field extensions of $\bQ$\uscolon
\eenum
every homeomorphism $\abs{X} \isomto \abs{X'}$ underlies a scheme isomorphism $X \isomto X'$.
\end{theo}

\begin{rema}
Since $X$ and $X'$ are proper and geometrically integral, we have $\Gamma(X, \sO_X) \cong k$ and $\Gamma(X', \sO_{X'}) \cong k'$, so a scheme isomorphism $X \isomto X'$ amounts to a field isomorphism $\iota \colon k \isomto k'$ and an isomorphism of varieties $X \tensor_{k,\, \iota} k' \isomto X'$.
\end{rema}

We will focus on case \ref{main-i} because it already contains most of the main ideas while avoiding further technicalities of \ref{main-ii} that largely concern the Hilbert irreducibility theorem. Roughly, the proof is based on studying Weil divisors $D$ on a normal $X$: such $D$ are determined by $\abs{X}$ alone because they may be viewed as formal $\bZ$-linear combinations of the points of codimension $1$---for instance, a reduced effective divisor $D \subset X$ is the closure of a finite set of codimension $1$ points in $X$. 
We will let
\[
\tst \Div(X) \ce \bigoplus_{x \in X^{(1)}} \bZ \qxq{and} \mathrm{Eff}(X) \ce \bigoplus_{x \in X^{(1)}} \bZ_{\ge 0}
\]
denote the group of all divisors (resp.,~the monoid of all effective divisors) on $X$. 

It is not clear if notions such as ampleness or linear equivalence of divisors are determined by $\abs{X}$ alone, and the crux of the argument is in exhibiting hypotheses under which they are. The ability to topologically recognize linear equivalence eventually reduces the reconstruction problem to a combinatorial recognition theorem for projective spaces in terms of incidence of their lines and points (von Staudt's fundamental theorem of projective geometry).

A divisor $D$ on $X$ is \emph{ample} if some multiple $nD$ with $n > 0$ is a Cartier divisor 
whose associated line bundle $\sO(nD)$ is ample. We let $\sim$ denote linear equivalence of divisors and 
say that divisors $D_1$ and $D_2$ on $X$ are \emph{linearly similar}, denoted by $D_1 \sim_{\mathrm{s}} D_2$, if $n_1D_1 \sim n_2 D_2$ for some nonzero integers $n_1$ and $n_2$. If this holds with $n_1 = n_2$, then we say that $D_1$ and $D_2$ are \emph{$\bQ$-linearly equivalent}, denoted by $D_1 \sim_\bQ D_2$. When we speak of reduced (resp.,~irreducible) divisors, we implicitly assume that they are also effective (resp.,~effective and reduced). With these definitions, the overall proof of Theorem \ref{thm:main} proceeds in the following stages, which successively reconstruct more and more of the structure of $X$ from the topological space $\abs{X}$, and which will be discussed individually in the indicated sections:
\[\ba
\abs{X} &\overset{\x{\uS\uref{sec:ample}}}{\leadsto} 
(\abs{X}, \sim_{\mathrm{s}} \x{of irreducible ample divisors}) 
\overset{\x{\uS\uref{sec:linear-ample}--\uS\uref{sec:linear}}}{\leadsto} (\abs{X}, \sim \x{of effective divisors}) \overset{\x{\uS\uref{sec:recover-X}}}{\leadsto} X.
\ea \]
The last step, namely, the determination of a normal, geometrically integral, projective variety $X$ of dimension $\ge 2$ over an infinite field from its underlying topological space $\abs{X}$ equipped with the relation of linear equivalence between effective divisors on $\abs{X}$ is due to Lieblich and Olsson \cite{LO19}.  

The initial results of the preprint \cite{LO19}, although already sufficient for Theorem \ref{thm:main} above, have been sharpened and expanded in the later preprint \cite{KLOS20}. 


\section{Recovering linear similarity of ample divisors} \label{sec:ample}

\textsc{Notation.} \emph{In this section, we let $X$ be a normal, geometrically integral, projective variety over a field $k$ of characteristic $0$.} 

\msk

The first stage of the proof of Theorem \ref{thm:main} is the reconstruction of linear similarity of irreducible ample divisors from the topological space $\abs{X}$ alone. This requires, in particular, to be able to topologically recognize ampleness of irreducible divisors, which rests crucially on the following Lefschetz type theorem for the divisor class group.

\begin{lemm}[\cite{RS06}*{Thm.~1}] \label{lem:RS}
Suppose that $\dim X \ge 3$ and let $\sL$ be an ample line bundle on $X$ whose linear system $\Gamma(X, \sL)$ is basepoint free. For some nonempty Zariski open $U \subset \Gamma(X, \sL)$ and every effective divisor $D \subset X$ that corresponds to a $k$-point in $U$, the following restriction map is injective:
\[
\Cl(X) \hra \Cl(D).
\]
\end{lemm}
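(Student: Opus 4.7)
My strategy is to combine Bertini's theorem in characteristic $0$ with a Grothendieck--Lefschetz-style comparison of the class group of $X$, of the Picard group of the formal completion $\widehat{X}$ of $X$ along $D$, and of the Picard group of $D$ itself, inserting $\Pic(\widehat{X})$ as an intermediary between the two ends.

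First, since $\sL$ is ample and basepoint free, the morphism $X \to \bP(\Gamma(X, \sL)^\vee)$ it defines is finite onto its image; in characteristic $0$, Bertini then supplies a nonempty Zariski open $U_0 \subset \Gamma(X, \sL)$ over which every corresponding $D$ is geometrically integral, smooth at every smooth point of $X$, and meets $X_\sing$ in codimension $\ge 2$ in $D$ (using $\dim X \ge 3$ and $\codim_X X_\sing \ge 2$ for the latter). After shrinking $U_0$ and invoking Serre's normality criterion one arranges, moreover, that $D$ is itself normal, so that $\Cl(D)$ makes sense and receives $\Pic(D)$ injectively. The restriction map $\Cl(X) \to \Cl(D)$, defined by intersecting with $D$ the prime divisors of $X$ distinct from $D$, then factors as
\[
\Cl(X) \xra{\ \alpha\ } \Pic(\widehat{X}) \xra{\ \beta\ } \Pic(D) \hookrightarrow \Cl(D),
\]
so it suffices to check injectivity of $\alpha$ and $\beta$ separately.

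For $\alpha$, I would invoke the SGA~2 style comparison for normal schemes: because $D$ is an effective ample Cartier divisor and $\dim X \ge 3$, the complement $X \setminus D$ is affine and $X$ has depth $\ge 2$ along $D$, so any reflexive rank-one sheaf on $X$ that becomes trivial on the formal neighborhood of $D$ must already be trivial on $X$. For $\beta$, I would unwind the thickenings $D_n \subset X$ via the exponential-type exact sequences
\[
0 \to \sO_D(-nD) \to \sO_{D_{n+1}}^\times \to \sO_{D_n}^\times \to 0,
\]
reducing the claim to the vanishing of $H^1(D, \sO_D(-nD))$ for all $n \ge 1$. These vanishings would follow from Kodaira--Ramanujam vanishing applied to the ample line bundle $\sL|_D$ on the normal projective variety $D$ in characteristic $0$ (if necessary by pulling back to a resolution of $D$ and comparing via a Leray spectral sequence).

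The main obstacle will be the case $n = 1$ of this last vanishing: Serre vanishing trivially handles $n \gg 0$, but controlling the very first infinitesimal neighborhood forces one to use an ample-based vanishing theorem on the possibly singular normal variety $D$, and this is exactly the point at which the characteristic-zero hypothesis becomes indispensable. This is also precisely where the Ravindra--Srinivas argument goes beyond the smooth setting of the classical Grothendieck--Lefschetz theorem, and it explains why the lemma is stated in characteristic $0$ only.
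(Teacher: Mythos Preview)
The paper does not prove this lemma: it cites \cite{RS06}*{Thm.~1}, stated there over an algebraically closed field, and reduces to that case by checking that $\Cl(X) \hookrightarrow \Cl(X_{\ov{k}})$ (if $H$ lies in the kernel then both $\sO(H)$ and $\sO(-H)$ have nonzero global sections, forcing $H \sim 0$ on the projective $X$). Your proposal, by contrast, attempts to reprove the Ravindra--Srinivas theorem itself.

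Your sketch has a genuine gap precisely in the singular case, which is the case of interest. The factorization $\Cl(X) \xra{\alpha} \Pic(\widehat{X}) \xra{\beta} \Pic(D) \hookrightarrow \Cl(D)$ is not well defined: a class in $\Cl(X)$ is represented by a reflexive rank-one sheaf, and whenever $X$ is singular at a point of $D$ (unavoidable once $\dim X_\sing \ge 1$, since an ample $D$ meets every positive-dimensional closed subset), this sheaf is not locally free there and its completion along $D$ is not a line bundle, so there is no map $\alpha$ into $\Pic(\widehat{X})$. For the same reason the restriction of a non-Cartier Weil divisor to $D$ need not land in $\Pic(D)$. You locate the extra difficulty over the smooth case in Kodaira vanishing on the singular $D$, but that step can indeed be handled roughly as you indicate; the real obstacle is that the entire factorization through formal Picard groups collapses once $\Cl(X) \neq \Pic(X)$. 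One can try to repair this by passing to the smooth locus, using $\Cl(X) = \Pic(X^\sm)$ and $\Cl(D) = \Pic(D \cap X^\sm)$, but then neither scheme is projective and both the SGA~2 comparison for $\alpha$ and the vanishing input for $\beta$ require genuinely new arguments---which is the actual content of \cite{RS06}.
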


The cited result is sharper but only applies to the base change $X_{\ov{k}}$ to an algebraic closure $\ov{k}$. This suffices because $\Cl(X) \hra \Cl(X_{\ov{k}})$: to see this last injectivity, note that for any divisor $H$ on $X$ that represents a class in the kernel, both $\sO(H)$ and $\sO(-H)$ have nonzero global sections, which, since $X$ is projective, means that $H \sim 0$. 

For proving Theorem \ref{thm:main} for varieties of dimension $\le 4$, one needs a refinement of Lemma \ref{lem:RS} in which $X$ is a surface (and $D$ is a curve). This requires arithmetic inputs, notably a theorem of N\'{e}ron from \cite{Ner52} on specialization of Picard groups. We refer to \cite{Kol20}*{Thm.~74} for this refinement of Lemma \ref{lem:RS}. It would also be interesting to extend Lemma~\ref{lem:RS} to positive characteristic because this may be useful for establishing further cases of Conjecture \ref{main-conj}. For instance, we could then weaken the assumption on $k$ in this section: we could let it be a field that is not a subfield of any $\ov{\bF}_p$.


 The following is the promised topological criterion for ampleness.

\begin{prop}[\cite{Kol20}*{Lem.~67}] \label{prop:ample-crit}
Suppose that $\dim X \ge 2$. An irreducible divisor $H \subset X$ is ample if and only if for every effective divisor $D \subset X$ and distinct closed points $x, x' \in X \setminus D$, there is an effective divisor $H' \subset X$ with 
\[
\abs{H \cap D} = \abs{H' \cap D}  \qxq{and} x \in H' \qxq{but} x' \not\in H'.
\] 
\end{prop}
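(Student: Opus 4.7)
The plan is to prove both directions separately, with Lemma \ref{lem:RS} the crucial input for the nontrivial converse.

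\emph{Forward direction.} Replacing $H$ by a positive multiple, we may assume $H$ is Cartier with $\sO(H)$ very ample, since the criterion depends only on the support of $H$. Let $s_H \in \Gamma(X, \sO(H))$ cut out $H$, and fix $m \gg 0$ so that $L \ce \sO(mH)(-D)$ is still very ample. A section $t \in \Gamma(X, L)$ gives rise --- via the canonical inclusion $\Gamma(X, L) \hra \Gamma(X, \sO(mH))$ induced by multiplication by the section of $\sO(D)$ cutting out $D$ --- to a section $\tilde{t}$ vanishing along $D$. Then $s \ce s_H^m + \tilde{t} \in \Gamma(X, \sO(mH))$ satisfies $s|_D = s_H^m|_D$, so $\abs{\mathrm{div}(s) \cap D} = \abs{H \cap D}$. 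Very ampleness of $L$ makes the evaluation $\Gamma(X, L) \to L_x \oplus L_{x'}$ surjective (using that $x, x' \notin D$), so we may choose $t$ ensuring $s(x) = 0$ and $s(x') \ne 0$; then $H' \ce \mathrm{div}(s)$ works.

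\emph{Converse.} Assume the criterion and aim to deduce ampleness of $H$. Fix an ample line bundle $\sA$ with basepoint free linear system and let $D$ be a general member of $\abs{\sA}$; by Lemma \ref{lem:RS} (or its surface refinement via N\'{e}ron's specialization theorem when $\dim X = 2$) the restriction $\Cl(X) \hra \Cl(D)$ is injective. When $\dim X \ge 3$, Bertini in characteristic $0$ additionally makes $H \cap D$ a single prime divisor on $D$ along which $H$ meets $D$ transversally, so for any $H'$ produced by the criterion the restriction $H'|_D$ is an effective Weil divisor supported on this prime, giving $H'|_D = m_{H'}(H \cap D) = m_{H'} H|_D$ in $\Cl(D)$ for some integer $m_{H'} \ge 1$, and injectivity of $\Cl(X) \hra \Cl(D)$ forces $H' \sim m_{H'} H$ in $\Cl(X)$. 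Thus $H$ is $\bQ$-Cartier, and for every distinct pair $x, x' \in X \setminus D$ some $\abs{mH}$ contains a divisor through $x$ but not $x'$.

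The main obstacle, and the technical heart of the argument, is to promote this pointwise statement to a uniform $M$ such that $\abs{MH}$ separates all pairs, which in turn yields very ampleness of $\sO(MH)$ and hence ampleness of $H$. The upgrade exploits that adding copies of $H$ to an $H'$ coming from the criterion preserves the intersection property $\abs{H' \cap D} = \abs{H \cap D}$, which boosts divisors from $\abs{mH}$ into any $\abs{MH}$ with $M \ge m$; combined with finite generation of $\bigoplus_m \Gamma(X, \sO(mH))$ on a projective normal $X$ with $\bQ$-Cartier $H$ and a Noetherian argument on the pairs $(x, x')$ (with separate treatment of pairs where $x' \in H$, requiring several auxiliary choices of $D$), one extracts a uniform $M$. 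For $\dim X = 2$ the intersection $H \cap D$ is zero-dimensional and the bookkeeping becomes more delicate but proceeds in parallel, relying on the arithmetic refinement of Lemma \ref{lem:RS} to supply the injection $\Cl(X) \hra \Cl(D)$.
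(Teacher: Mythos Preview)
Your forward direction is essentially the paper's argument, though note a small wrinkle: $D$ is only an effective \emph{Weil} divisor on the normal variety $X$, so $\sO(mH)(-D)$ need not be a line bundle and ``very ample'' does not literally apply. The fix is easy (work with the ideal sheaf $\sI_D$ and Serre vanishing, or simply cite \cite{EGAIII1}*{2.2.4} as the paper does), so this is cosmetic.

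The converse is where your argument diverges, and there is a real gap. Your reduction via Lemma~\ref{lem:RS} and Bertini to obtain $H' \sim m_{H'} H$ is exactly right and matches the paper. But you then declare the ``main obstacle'' to be upgrading the pair-dependent $m_{H'}$ to a uniform $M$ with $\sO(MH)$ very ample, and the argument you sketch for this is both unnecessary and unconvincing: finite generation of $\bigoplus_m \Gamma(X, \sO(mH))$ is typically a \emph{consequence} of ampleness rather than an input, and the ``Noetherian argument on pairs $(x,x')$'' is not an argument. The paper avoids this entirely by invoking Kleiman's criterion \cite{Kle66}*{Ch.~III, Thm.~1}, which says precisely that $H$ is ample as soon as, for every pair of distinct closed points $x, x'$, \emph{some} multiple $nH$ (allowed to depend on the pair) admits an effective member through $x$ and not $x'$. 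With Kleiman in hand, the proof ends the moment you have $H' \sim m_{H'} H$; there is no uniformity to chase.

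A secondary point: you fix a single $D$ and then only separate pairs in $X \setminus D$, deferring the remaining pairs to a vague ``several auxiliary choices of $D$''. The paper instead chooses $D$ \emph{after} fixing $x, x'$, arranging that $D$ avoids both points (and the generic points of $H$), which handles all pairs uniformly in one stroke. This is the cleaner organization once you know Kleiman is the endgame.
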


\begin{proof}[Sketch of proof]
To begin with the simpler direction, we assume that $H$ is ample, replace it by a multiple to assume that $H$ is Cartier with associated very ample line bundle $\sL$, and fix a $D$ and $x, x' \in X \setminus D$. By \cite{EGAIII1}*{2.2.4}, for some $n > 0$ there is an $s \in \Gamma(X, \sL^{\tensor n})$ that vanishes at $x$, does not vanish at $x'$, and is such that the vanishing locus of $s|_D$ is $H \cap D$. We can take $H'$ to be the vanishing locus of $s$. 

For the converse, we make a simplifying assumption that $\dim X \ge 3$ (for $\dim X = 2$ one needs a refinement of Lemma \ref{lem:RS}). To argue that $H$ is ample, we will use Kleiman's criterion \cite{Kle66}*{Ch.~III, Thm.~1 (i)$\Leftrightarrow$(iv) on p.~317}, according to which it suffices to show that for all distinct closed points $x, x' \in X$, there exist an integer $n > 0$ and an effective divisor $\wt{H}$ such that $\wt H \sim nH$ and $x \in \wt H$ but $x' \not\in \wt H$ (this will simultaneously prove that some $nH$ is basepoint free, so is also Cartier, as we require of ample divisors). Since $X$ is projective, Lemma \ref{lem:RS} and the Bertini theorem applied to the irreducible components of $H_{\ov{k}}$ supply a normal effective divisor $D \subset X$ not containing $x$, $x'$, or any generic point of $H$ such that $H \cap D$ is irreducible and $\Cl(X) \hra \Cl(D)$. By applying the assumption to this $D$, we find an effective divisor $H' \subset X$ with $\abs{H \cap D} = \abs{H' \cap D}$  and $x \in H'$ but $x' \not\in H'$. Since $H \cap D$ is irreducible, this equality of topological spaces means that $n H|_D \sim n' H'|_D$ for some $n, n' > 0$. The injectivity of $\Cl(X) \hra \Cl(D)$  then implies that $nH \sim n'H'$, and it remains to set $\wt{H} \ce n' H'$.
\end{proof}

Proposition \ref{prop:ample-crit} allows us to topologically recognize irreducible ample divisors on $X$. Granted this, the following proposition then expresses the linear similarity relation $\sim_{\mathrm{s}}$ between such divisors purely in terms of the topological space $\abs{X}$.


\begin{prop}[\cite{Kol20}*{Lem.~68}] \label{prop:lin-sim}
Suppose that $\dim X \ge 3$. Irreducible divisors $H_1, H_2 \subset X$ with $H_1$ ample are linearly similar if and only if for any disjoint, irreducible, closed subsets $Z_1, Z_2 \subset X$ of dimension $\ge 1$ there is an irreducible divisor $H' \subset X$ with 
\[
\abs{H_1 \cap Z_1} = \abs{H' \cap Z_1} \qxq{and} \abs{H_2 \cap Z_2} =  \abs{H' \cap Z_2}.
\]
\end{prop}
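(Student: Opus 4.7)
The plan is to prove the two implications separately, using Serre vanishing combined with Bertini for the forward direction, and Lemma \ref{lem:RS} combined with a dimension reduction for the reverse direction.

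For $(\Rightarrow)$, assume $H_1 \sim_{\mathrm{s}} H_2$, so $n_1 H_1 \sim n_2 H_2$ for some positive integers $n_1, n_2$; in particular $H_2$ is also ample, with common associated line bundle $L \ce \sO_X(n_1 H_1) \cong \sO_X(n_2 H_2)$. For $m \gg 0$, $mL$ is very ample and $H^1(X, mL \tensor \sI_{Z_1 \cup Z_2}) = 0$ by Serre vanishing. Combined with the disjointness $Z_1 \sqcup Z_2$, this yields a surjection
\[
\Gamma(X, mL) \twoheadrightarrow \Gamma(Z_1, mL|_{Z_1}) \oplus \Gamma(Z_2, mL|_{Z_2}).
\]
The canonical section of $\sO_X(mn_i H_i) \cong mL$ cutting out $mn_i H_i$ restricts on $Z_i$ to a section $t_i$ whose vanishing locus has support $\abs{H_i \cap Z_i}$ (in the degenerate case $Z_i \subset H_i$, which by ampleness of $H_i$ can only occur if $Z_i$ has codimension $\ge 2$, one takes $t_i = 0$). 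Lifting $(t_1, t_2)$ produces a nonempty affine subspace $A \subset \Gamma(X, mL)$ of candidate sections. For $m$ large, the kernel $\Gamma(X, mL \tensor \sI_{Z_1 \cup Z_2})$ generates $mL$ on $X \setminus (Z_1 \cup Z_2)$, so the base locus of $A$ is contained in $\abs{H_1 \cap Z_1} \cup \abs{H_2 \cap Z_2}$ (of codimension $\ge 2$), and the associated rational map has image of dimension $\dim X \ge 3$. The Bertini irreducibility theorem in characteristic $0$ then yields a general $s \in A$ with $V(s)$ irreducible, which we take as $H'$.

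For $(\Leftarrow)$, assume the intersection-matching hypothesis. The strategy is to reduce to a lower-dimensional situation and then invoke the Lefschetz-type injectivity. Apply Lemma \ref{lem:RS} combined with Bertini to choose a normal irreducible divisor $D \subset X$ of dimension $\dim X - 1 \ge 2$ such that $\Cl(X) \hra \Cl(D)$, $D$ contains neither $H_1$ nor $H_2$, and both $H_1 \cap D$ and $H_2 \cap D$ are irreducible divisors on $D$. Feeding general disjoint irreducible test pairs $(Z_1, Z_2)$ inside $D$ back into the matching hypothesis on $X$ produces, for each pair, an irreducible $H' \subset X$ whose restriction to $D$ matches $H_1|_D$ on $Z_1$ and $H_2|_D$ on $Z_2$ set-theoretically. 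Varying the pairs constrains $H_1|_D$ and $H_2|_D$ to be linearly similar in $\Cl(D)$, and the injection $\Cl(X) \hra \Cl(D)$ then lifts this to $H_1 \sim_{\mathrm{s}} H_2$ on $X$.

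The main obstacle is the reverse direction, specifically the passage from set-theoretic intersection data to divisor-class information: a priori, $\abs{H' \cap Z_i} = \abs{H_i \cap Z_i}$ only records the support of $H'|_{Z_i}$, not its multiplicities, so one cannot immediately conclude that $H_1|_D$ and $H_2|_D$ are $\bQ$-proportional in $\Cl(D)$. Surmounting this requires a careful choice of test pairs $(Z_1, Z_2)$ flexible enough to extract rigidity on divisor classes yet still admissible as input for the matching hypothesis on $X$, together with an induction on $\dim X$ whose base case is a surface situation where intersections with sufficiently many pencils of curves force linear similarity.
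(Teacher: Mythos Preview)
Your forward direction is essentially the paper's argument (Serre vanishing plus Bertini to produce the interpolating irreducible $H'$); the only missing point is the brief justification that $n_1,n_2$ may be taken of the same sign, which the paper handles by noting that otherwise $\sO(mH_1)$ and $\sO(-mH_1)$ would both have nonzero global sections for large divisible $m$.

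The reverse direction, however, misses the key idea and your proposed fix is both unnecessary and unlikely to work. Your plan is to cut by a single ample $D$ with $\Cl(X)\hookrightarrow\Cl(D)$, feed in many test pairs inside $D$, and induct down to a surface. Two problems: the hypothesis on $X$ does not descend to a hypothesis of the same shape on $D$ (the restriction $H'|_D$ of the interpolating divisor need not be irreducible, so you cannot re-apply the proposition on $D$), and on a surface, matching supports of intersections with curves only records numerical data, which does not force proportionality in $\Cl$. The paper avoids all of this with a single, direct step. One iterates Lemma~\ref{lem:RS} (with Bertini) to build \emph{two separate} disjoint, irreducible, normal complete-intersection subvarieties $Z_1,Z_2\subset X$ such that $\Cl(X)\hookrightarrow\Cl(Z_i)$ and, crucially, $H_i\cap Z_i$ is an \emph{irreducible} divisor on $Z_i$. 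A single application of the hypothesis to this particular pair yields one $H'$. Irreducibility of $H_i\cap Z_i$ is exactly what converts the set-theoretic equality $\abs{H'\cap Z_i}=\abs{H_i\cap Z_i}$ into a statement about classes: both $H_i|_{Z_i}$ and $H'|_{Z_i}$ are supported on the same irreducible divisor, hence are positive multiples of it, so $n_iH_i|_{Z_i}\sim n_i'H'|_{Z_i}$ for some $n_i,n_i'>0$. The injection $\Cl(X)\hookrightarrow\Cl(Z_i)$ lifts this to $n_iH_i\sim n_i'H'$ on $X$ for $i=1,2$, and combining gives $H_1\sim_{\mathrm{s}}H_2$. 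No variation of test pairs, no induction, no surface base case: the whole burden is shifted to constructing $Z_i$ with those two properties, which is precisely what repeated use of Lemma~\ref{lem:RS} provides (and is why the sketch is given under $\dim X\ge 5$, with the sharper bound requiring the refined Lefschetz statement mentioned after Lemma~\ref{lem:RS}).
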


\begin{proof}[Sketch of proof]
To begin with the simpler direction, we assume that $n_1H_1 \sim n_2 H_2$ for some nonzero $n_1, n_2$ and fix $Z_1, Z_2$ as in the statement. The $n_i$ must have the same sign: otherwise $\sO(mH_1)$ and $\sO(-mH_1)$ would have nonzero global sections for every large, sufficiently divisible $m > 0$. Thus, we may assume that $n_1, n_2 >~\!\!0$. After replacing $n_1$ and $n_2$ by $nn_1$ and $nn_2$ for a large $n > 0$, we then combine \cite{EGAIII1}*{2.2.4} and the Bertini theorem \cite{Jou83}*{6.10} to find a global section of $\sO(n_1H_1) \simeq \sO(n_2H_2)$ whose vanishing locus is an  irreducible ample divisor $H'$ with the desired properties (and even such that the intersection of $H'_{\ov{k}}$ with every irreducible component of $X_{\ov{k}}$ is irreducible). 

For the converse, we make a simplifying assumption that $\dim X \ge 5$ (to improve to $\dim X \ge 3$ one again needs a refinement of Lemma \ref{lem:RS})---this time the assumption is more serious because the $\dim X \ge 5$ case does not suffice for Theorem~\ref{thm:main}. Letting $H_1, H_2$ be irreducible ample divisors as in the statement, we iteratively apply Lemma~\ref{lem:RS} (with the Bertini theorem) to build disjoint, irreducible, normal closed subschemes $Z_1, Z_2 \subset X$ that are complete intersections of dimension $2$ such that $H_1 \cap Z_1 \subset Z_1$ and $H_2 \cap Z_2 \subset Z_2$ are irreducible divisors and the following restriction maps are injective: 
\[
\Cl(X) \hra \Cl(Z_1) \qxq{and} \Cl(X) \hra \Cl(Z_2).
\]
Since the intersections $H_1 \cap Z_1$ and $H_2 \cap Z_2$ are irreducible, these injections and the displayed equalities involving $H'$ ensure that $n_1 H_1 \sim n_1' H'$ and $n_2H_2 \sim n_2'H'$ for some $n_i, n_i' > 0$. It then follows that $n_1n_2' H_1 \sim n_1'n_2 H_2$, so that $H_1$ and $H_2$ are linearly similar, as desired. 
\end{proof}

Propositions \ref{prop:ample-crit} and \ref{prop:lin-sim} jointly carry out the first reconstruction step promised in~\S\ref{sec:master}:
\[
\abs{X} \leadsto (\abs{X}, \sim_{\mathrm{s}} \x{of irreducible ample divisors}). 
\]
They also topologically determine complete intersection subvarieties as follows. 

\begin{coro} \label{cor:H-sci}
Suppose that $\dim X \ge 3$ and let $H \subset X$ be an irreducible ample divisor. The topological space $\abs{X}$ alone determines the collection of those closed subsets $Z \subset \abs{X}$ that are \emph{set-theoretic complete $H$-intersections}, i.e.,~for which there are irreducible divisors $H_i \sim_{\mathrm{s}} H$ for $i = 1, \dotsc, r$ with $r = \codim(Z, X)$ such that 
\[
Z = \abs{H_1 \cap \dotsc \cap H_r}.
\]
\end{coro}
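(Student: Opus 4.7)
The plan is to verify that every ingredient in the definition of a set-theoretic complete $H$-intersection is a purely topological notion on $\abs{X}$, so that the collection of such subsets is determined by $\abs{X}$ together with the already-identified closed subset $H$.

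First, codimension of a closed subset of a spectral space (like $\abs{X}$) depends only on the topology, being the supremum of lengths of chains of irreducible closed subsets through it; irreducibility is likewise topological. Consequently, the notion of an irreducible divisor on $X$---an irreducible closed subset of $\abs{X}$ of codimension one---is purely topological, and the integer $r = \codim(Z, X)$ attached to any closed $Z \subset \abs{X}$ is too. Moreover, set-theoretic intersection of closed subsets is by definition a topological operation.

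Next, Proposition \uref{prop:ample-crit} gives a purely topological criterion for an irreducible divisor to be ample, so the ampleness of $H$ is visible from $\abs{X}$. Given this, Proposition \uref{prop:lin-sim} (applied with $H_1 = H$ ample and $H_2$ ranging over irreducible divisors on $X$) characterizes topologically the set of irreducible divisors $H' \subset X$ with $H' \sim_{\mathrm{s}} H$: the relation $\sim_{\mathrm{s}}$ is read off from intersection patterns with disjoint irreducible closed subsets of $\abs{X}$ of dimension $\ge 1$, all of which are topological data.

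Putting these observations together, a closed subset $Z \subset \abs{X}$ of codimension $r$ is a set-theoretic complete $H$-intersection if and only if there exist $r$ irreducible divisors $H_1, \dotsc, H_r \subset X$, each in the (topologically determined) $\sim_{\mathrm{s}}$-class of $H$, with $\abs{H_1 \cap \dotsc \cap H_r} = Z$. This is an existential statement whose every clause is phrased purely in terms of the topological space $\abs{X}$ and the closed subset $H$, so the collection of such $Z$ is determined by $\abs{X}$ alone. There is no substantive obstacle here---the corollary is a formal bookkeeping consequence of Propositions \uref{prop:ample-crit} and \uref{prop:lin-sim}, once one notes that codimension, irreducibility, and set-theoretic intersection are topological notions.
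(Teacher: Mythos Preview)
Your proof is correct and follows the same approach as the paper: both invoke Propositions~\ref{prop:ample-crit} and~\ref{prop:lin-sim} to see that ampleness of $H$ and the relation $H_i \sim_{\mathrm{s}} H$ are determined by $\abs{X}$. Your version is simply more explicit in spelling out that codimension, irreducibility, and set-theoretic intersection are topological, points the paper leaves tacit.
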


\begin{proof}
Propositions \ref{prop:ample-crit} and \ref{prop:lin-sim} imply that $\abs{X}$ alone determines the property of $H$ being ample, as well as the linear similarity relation $H_i \sim_{\mathrm{s}} H$. 
\end{proof}

We will call such a closed subscheme $H_1 \cap \ldots \cap H_r \subset X$ a \emph{complete $H$-intersection}. The requirement that the $H_i$ be irreducible and only linearly similar (as opposed to linearly equivalent) to $H$ makes this definition slightly nonstandard, but it is convenient because Propositions \ref{prop:ample-crit} and \ref{prop:lin-sim} only concern irreducible divisors. Any positive-dimensional complete $H$-intersection $H_1 \cap \dotsc \cap H_r$ is automatically geometrically connected by the Lefschetz hyperplane theorem \cite{SGA2new}*{XII, 3.5}, and the same then also holds for set-theoretic complete $H$-intersections.


\section{Recovering $\bQ$-linear equivalence of ample divisors} \label{sec:linear-ample}

\textsc{Notation.} \emph{In this section, we let $X$ be a normal, geometrically integral, projective variety over field $k$ of characteristic $0$ and let $H \subset X$ be an irreducible ample divisor. }

\msk

To prepare for topological recognition of linear equivalence of divisors, for now we continue to restrict to irreducible ample divisors and show how to recognize $\bQ$-linear equivalence between them. This refines the result presented in the previous section because $\bQ$-linear equivalence $\sim_{\bQ}$ is a finer relation than linear similarity $\sim_{\mathrm{s}}$. In addition, it involves techniques that will also be relevant later, such as topological recognition of reduced $0$-dimensional intersections that we discuss in Proposition \ref{prop:restr-linking} below. A key notion behind these techniques is that of (topological) linkage defined as follows.


\begin{defi} \label{def:free-linking}
Let $Y_1, Y_2 \subset X$ be integral closed subschemes with $\dim(Y_1 \cap Y_2) = 0$.  Irreducible divisors $H_1, H_2 \subset X$ with $H_1 \sim_{\mathrm{s}} H_2 \sim_{\mathrm{s}} H$ are \emph{$H$-linked for $Y_1$ and $Y_2$} if some irreducible divisor $\wt{H} \subset X$ with $\wt{H} \sim_{\mathrm{s}} H$   satisfies
\[
|\wt{H} \cap Y_1| = |H_1 \cap Y_1| \qxq{and} |\wt{H} \cap Y_2| = |H_2 \cap Y_2|.
\]
The \emph{$H$-linking of $Y_1$ and $Y_2$ is free} if, for some finite set of closed points $\Sigma \subset Y_1 \cup Y_2$, all $H_1$ and $H_2$ as above that are disjoint from $\Sigma$ are $H$-linked for $Y_1$ and $Y_2$.
\end{defi}

By Propositions \ref{prop:ample-crit} and \ref{prop:lin-sim}, if $\dim X  \ge 3$, then these notions depend only on $\abs{X}$. They topologically encode reducedness of $0$-dimensional schematic intersections as follows.

\bprop[\cite{Kol20}*{Prop.~81}] \label{prop:free-linking}
Let $Y_1$ and $Y_2$ be as in Definition \uref{def:free-linking} and suppose that $\dim X \ge 3$, $\dim Y_1 \ge 2$, $\dim Y_2 \ge 1$, and $Y_1$ is geometrically connected (for instance, a set-theoretic complete $H$-intersection). Then the $H$-linking of $Y_1$ and $Y_2$ is free if and only if $Y_1 \cap Y_2$ is reduced with $\Gamma(Y_i, \sO) \isomto \Gamma(Y_1 \cap Y_2, \sO)$ for some $i$.
\eprop

\begin{proof}[Sketch of proof]
Since $\sO(H)$ is ample, its global sections on $Y_1 \cup Y_2$ lift to $X$ after possibly replacing them by powers. The $H_i$ and $\wt{H}$ that appear in the definition of free $H$-linking correspond to some  $s_i \in \Gamma(X, \sO(n_i H))$ and $\wt{s} \in \Gamma(X, \sO(\wt{n}H))$. Thus, in essence, the question of free $H$-linking of $Y_1$ and $Y_2$ is that of patching the sections $s_i|_{Y_i}$ along $Y_1 \cap Y_2$ to glue some of their powers to a section over $Y_1 \cup Y_2$. We may adjust the $s_i|_{Y_i}$ by global units, so the glueing is intimately related to the restriction map
\[
\Gamma(Y_1, \sO)^\times \times \Gamma(Y_2, \sO)^\times \ra \Gamma(Y_1 \cap Y_2, \sO)^\times.
\]
The analysis of this map eventually gives the claim, see \emph{loc.~cit.}~for details.
\end{proof}

\begin{rema}
In this proposition, the geometric connectedness of $Y_1$ ensures that $k \isomto \Gamma(Y_1, \sO)$. For $X$ of dimension $\ge 4$, this then leads to a topological criterion for recognizing when a $k$-smooth closed point $x \in X$ is $k$-rational because one may realize such an $x$ as the schematic intersection $Y_1 \cap Y_2$ of set-theoretic complete $H$-intersections $Y_i$ with $\dim Y_1 = 2$ and $\dim Y_2 = 1$, see \cite{Kol20}*{Cor.~82} and Corollary \ref{cor:H-sci} above. 
\end{rema}

More generally, by building on the idea of analyzing free $H$-linking of set-theoretic complete $H$-intersections, Koll\'{a}r is able to topologically recognize isomorphy of $0$-dimensional closed subschemes lying in the smooth locus as follows.


\bprop[\cite{Kol20}*{\S84}] \label{prop:recognize-iso}
Suppose that $\dim X \ge 4$ and let $Z_1, Z_2 \subset X^\sm$ be $0$-dimensional, reduced closed subschemes. Then $(|X|, \sim_{\mathrm{s}} \!\x{of irreducible ample divisors})$ alone determines whether or not $Z_1$ and $Z_2$ are isomorphic as $k$-schemes. 
\eprop

In the setting of Proposition \ref{prop:free-linking}, it is also possible to topologically determine when $Y_1 \cap Y_2$ is reduced (without the additional condition on the global sections) as follows.

\bprop \label{prop:restr-linking}
Suppose that $\dim X \ge 3$ and let $Y \subset X$ be an irreducible, geometrically connected closed $k$-subvariety. For irreducible, geometrically connected closed $k$-subvarieties $Y', Z \subset X$ such that 
\[
|Y \cap Z| = |Y' \cap Z| \qx{and this intersection is $0$-dimensional,}
\]
consider the following condition (that is topological by Propositions~\ref{prop:ample-crit} and \ref{prop:lin-sim}):
\be\tag{$\bigstar$} \label{eqn:min-restr}
\parbox{30em}{there is a finite set of closed points $\Sigma \subset Y \cup Y' \cup Z$ such that  all irreducible divisors $H_1, H_2 \subset X$ disjoint from $\Sigma$ with $H_i \sim_{\mathrm{s}} H$ that are $H$-linked for $Y'$ and $Z$ are also $H$-linked for $Y$ and $Z$.}
\ee

\benum
\m {\upshape(}\cite{Kol20}*{Cor.~89}{\upshape)}. \label{RL-a}
If $\dim Y \ge 2$ and $Z$ satisfies $\dim X - 3 \ge \dim Z \ge 1$ and is such that \eqref{eqn:min-restr} holds for every $2$-dimensional, irreducible complete $H$-intersection $Y'$, 
then $Y \cap Z$ is reduced.

\m {\upshape(}\cite{Kol20}*{Cor.~93}{\upshape)}. \label{RL-b}
If $\dim Y \ge 3$ and $y \in Y$ is a closed point such that $X$ is $k$-smooth at $y$, then $Y$ is $k$-smooth at $y$ if and only if there is an irreducible complete $H$-intersection $Z \subset X$ of codimension $\dim Y$ such that $Y \cap Z$ is $0$-dimensional, contains $y$, and \eqref{eqn:min-restr} holds whenever $Y'$ is an irreducible complete $H$-intersection. 
\eenum
\eprop

In \ref{RL-a}, if $Y \cap Z$ is reduced, then $Y \cap Z \subset Y' \cap Z$, so a patching of global sections of powers of $\sO(H)$ that gives rise to an $H$-linking of $H_1$ and $H_2$ for $Y'$ and $Z$ also gives a required patching with $Y$ in place of $Y'$ (compare with the sketch of proof for Proposition \ref{prop:free-linking}). Thus, the main part is the converse, for which we refer to \emph{loc.~cit.} The role of the assumption on $\dim Z$ is to ensure, via the Bertini theorem, that there are many possible $Y'$ with $|Y' \cap Z| = |Y \cap Z|$: the intersection of all such $Y'$ is $(Y \cap Z)^\red$.

In addition to topologically recognizing $k$-points and reducedness of $0$-dimensional intersections as above, Koll\'{a}r determines equality of intersection numbers as follows.

\bprop[\cite{Kol20}*{Cor.~96}] \label{prop:intersection-numbers}
Suppose that $\dim X \ge 2$. For prime divisors $D_1, \dotsc, D_n \subset X$ and rational numbers $q_{ij} \in \bQ_{> 0}$ with $1 \le i, j \le n$, we have
\[
D_i . H^{\dim X - 1} = q_{ij} \cdot D_j . H^{\dim X - 1} \qxq{for all} 1 \le i, j \le n
\]
if and only if for some closed $Z_0 \subset X$ of codimension $\ge 2$ and every closed $Z \subset X$ of codimension $\ge 2$ containing $Z_0$  there is a $1$-dimensional, irreducible complete $H$-intersection $C \subset X$ disjoint from $Z$ such that each $C \cap D_i$ is a disjoint union of $m_i$ copies of $\Spec(K)$ for a finite field extension $K/k$ that does not depend on $i$, and~$q_{ij} = \f{m_i}{m_j}$.
\eprop

\begin{proof}[Sketch of proof]
The `only if' follows from the definitions: indeed, the intersection number $D_i . H^{\dim X - 1}$ is read off from the schematic intersection $C \cap D_i$. For the `if,' one first reduces to $X$ being a surface by cutting it by irreducible ample divisors that are linearly similar to $H$ and constructed via the Bertini theorem. Then the $D_i$ are curves and one seeks a $C$ cut out by some nonzero $s \in H^0(X, \sO(mH))$ for a large $m > 0$ (such an $s$ would lift to a section of $\sO(mH)$ defined over the original $X$). By considering $\sO(mH)|_{\bigcup_{i = 1}^n D_i}$ instead, it  suffices to find a global section $s_0$ of this sheaf for a large $m > 0$ such that $\{s_0 = 0\}$ is disjoint from $Z$ and $D_i \cap D_j$ for distinct $D_i$ and $D_j$ and is a union of copies of $\Spec(K)$ for some finite extension $K/k$. The key input to finding this $s_0$ is a variant of a result of Poonen \cite{Poo01}, according to which, for any quasi-finite, generically \'{e}tale morphism $\pi \colon \bigcup_{i = 1}^n D_i \ra \bP^1_k$, there are infinitely many separable closed points $p \in \bP^1_k$ whose $\pi$-preimage is a reduced disjoint union of copies of $p$.
\end{proof}

In practice, $Z_0$ is the nonsmooth locus $X \setminus X^\sm$. However, $X \setminus X^\sm$ need not \emph{a priori} be determined by the topological space $\abs{X}$, so, to get around this, one allows larger $Z$ while still retaining the smoothness of $X \setminus Z$. In particular, Propositions \ref{prop:recognize-iso} and \ref{prop:restr-linking}~\ref{RL-a} show that, for $X$ of dimension $\ge 4$, Proposition~\ref{prop:intersection-numbers} gives a purely topological criterion for determining the ratios between the intersection numbers $D_i . H^{\dim X - 1}$. 

\begin{coro} \label{cor:intersection-numbers}
Suppose that $\dim X \ge 4$. For prime divisors $\{D_\lambda\}_{\lambda \in \Lambda}$ on $X$, the topological space $\abs{X}$ alone determines the ratios between the $D_\lambda . H^{\dim X - 1}$. 
\end{coro}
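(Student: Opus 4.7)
The plan is to verify that, for each fixed assignment of candidate ratios $(q_{ij})$, the schematic criterion of Proposition \ref{prop:intersection-numbers} can be tested purely from $\abs{X}$; the true ratios are then pinned down by sweeping over the candidate $(q_{ij})$. By Propositions \ref{prop:ample-crit} and \ref{prop:lin-sim}, the datum $(\abs{X}, \sim_{\mathrm{s}})$ depends only on $\abs{X}$; I fix once and for all an irreducible ample divisor $H$ and, via Corollary \ref{cor:H-sci}, the topologically-accessible family of $1$-dimensional irreducible complete $H$-intersections $C \subset X$. The prime divisors $D_\lambda$ are intrinsic to $\abs{X}$ as its irreducible closed subsets of codimension one.

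The certification proceeds clause by clause. The existential quantifiers over $Z_0$, over $Z \supseteq Z_0$ (both closed of codimension $\ge 2$), and over the curve $C$ disjoint from $Z$ are manifestly topological. Reducedness of the $0$-dimensional intersection $C \cap D_\nu$ is detected by Proposition \ref{prop:restr-linking}\ref{RL-a}, applied with $Y \ce D_\nu$ (so $\dim Y = \dim X - 1 \ge 3$) and $Z \ce C$ (so $\dim Z = 1 \le \dim X - 3$, using $\dim X \ge 4$). Granted reducedness, the integer $m_\nu$ equals the cardinality of the topological intersection $\abs{C} \cap \abs{D_\nu}$. Finally, for $(m_\nu)$ compatible with the prescribed $(q_{ij})$, the assertion that each $C \cap D_\nu$ is $k$-isomorphic to $m_\nu$ copies of a common $\Spec(K)$ reduces---after fixing a reference closed point $p_0$ in some $C \cap D_{\nu_0}$---to checking via Proposition \ref{prop:recognize-iso} that for every closed point $p \in C \cap D_\nu$ the reduced singleton $\Spec(\kappa(p)) \subset X$ is $k$-isomorphic to $\Spec(\kappa(p_0))$. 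This check is topological \emph{provided} all these singletons lie in $X^\sm$.

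The main obstacle is precisely that proviso, since $X^\sm$ is not a priori topological, so we cannot directly enforce $C \cap D_\nu \subset X^\sm$ on the topological side. The resolution, already flagged in the remark preceding the corollary, exploits that $Z_0$ is existentially quantified. On the schematic side, the natural choice $Z_0 \ce X \setminus X^\sm$ is closed of codimension $\ge 2$ by normality of $X$ and forces every $C$ disjoint from $Z \supseteq Z_0$ into $X^\sm$, making Proposition \ref{prop:recognize-iso} both applicable and correct. Conversely, any purely topological witness $Z_0 \subset \abs{X}$ of the criterion can be replaced by the enlargement $Z_0 \cup (X \setminus X^\sm)$, still closed of codimension $\ge 2$; this enlarged $Z_0$ once more forces admissible $C$ into $X^\sm$ and makes the topological verification agree with the schematic one. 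The schematic and topological criteria therefore coincide, and the ratios $q_{\lambda\mu} = m_\lambda/m_\mu$ are recovered from $\abs{X}$ alone.
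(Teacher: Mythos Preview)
Your proof is correct and follows the same approach as the paper: the paper's one-line proof (``apply the reasoning above to every pair $\{D_\lambda, D_{\lambda'}\}$'') is precisely the unpacking you give, invoking Propositions~\ref{prop:ample-crit}, \ref{prop:lin-sim}, \ref{prop:recognize-iso}, \ref{prop:restr-linking}\ref{RL-a} and Corollary~\ref{cor:H-sci} to make each clause of Proposition~\ref{prop:intersection-numbers} topological, and handling the $X^{\sm}$ obstruction via the existential $Z_0$ exactly as the paragraph preceding the corollary indicates. The only cosmetic difference is that the paper reduces explicitly to pairs, whereas you quantify over all $D_\lambda$ at once; since ratios are determined pairwise, this is immaterial.
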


\begin{proof}[Sketch of proof]
It suffices to apply the reasoning above to every pair $\{D_\lambda, D_{\lambda'}\}$.
\end{proof}

In a similar vein, Proposition~\ref{prop:intersection-numbers} implies the following criterion for recognizing $\bQ$-linear equivalence of irreducible ample divisors.

\begin{coro}[\cite{Kol20}*{Claim 97.5}] \label{cor:ample-lin-eq}
Suppose that $\dim X \ge 2$. For irreducible ample divisors $H_1, H_2 \subset X$, we have 
\[
H_1 \sim_\bQ H_2 \qxq{if and only if both}  H_1 \sim_{\mathrm{s}} H_2  \qxq{and} H_1. H^{\dim X - 1} = H_2 . H^{\dim X - 1}.
\]
\end{coro}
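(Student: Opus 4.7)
The plan is to verify the equivalence directly from the definitions, using nothing beyond linearity of intersection numbers and positivity of intersections of ample divisors. The statement of Proposition \ref{prop:intersection-numbers} supplies the topological interpretation of the relation $H_1.H^{\dim X - 1} = H_2.H^{\dim X - 1}$, but is not needed for the logical equivalence in this corollary.

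For the forward direction, suppose $H_1 \sim_\bQ H_2$, so that $nH_1 \sim nH_2$ for some nonzero integer $n$. With $n_1 = n_2 = n$ in the definition of linear similarity, we obtain $H_1 \sim_{\mathrm{s}} H_2$. Intersecting the linear equivalence $nH_1 \sim nH_2$ against $H^{\dim X - 1}$ and dividing by $n$ gives $H_1.H^{\dim X - 1} = H_2.H^{\dim X - 1}$.

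For the converse, assume $H_1 \sim_{\mathrm{s}} H_2$ and $H_1.H^{\dim X - 1} = H_2.H^{\dim X - 1}$. Pick nonzero integers $n_1, n_2$ with $n_1 H_1 \sim n_2 H_2$. As in the proof of Proposition \ref{prop:lin-sim}, the integers $n_1$ and $n_2$ must have the same sign: otherwise, after possibly replacing $H_2$ with $H_1$, some positive multiple of $H_1$ would be linearly equivalent to a nonpositive multiple, forcing $\sO(mH_1)$ and $\sO(-mH_1)$ to both have nonzero global sections for large divisible $m$, which contradicts the ampleness of $H_1$ on the projective variety $X$. After replacing both by their negatives if necessary, we may assume $n_1, n_2 > 0$. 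Intersecting $n_1 H_1 \sim n_2 H_2$ with $H^{\dim X - 1}$ yields
\[
n_1 \cdot (H_1.H^{\dim X - 1}) = n_2 \cdot (H_2.H^{\dim X - 1}).
\]
Since $H$ and $H_i$ are ample divisors on the projective variety $X$, the common intersection number $H_i.H^{\dim X - 1}$ is strictly positive, so we conclude $n_1 = n_2$. Setting $n \ce n_1 = n_2$, the equivalence $nH_1 \sim nH_2$ gives $H_1 \sim_\bQ H_2$, as required.

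The only conceptual step is the sign argument, which is borrowed verbatim from Proposition \ref{prop:lin-sim}; the remainder is linear algebra in $\Div(X)_\bQ$ together with ampleness-based positivity. I do not foresee a genuine obstacle here.
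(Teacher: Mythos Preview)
Your argument is correct and in fact more direct than the paper's. Both proofs handle the `only if' direction identically (invariance of intersection numbers under linear equivalence). For the `if' direction, the paper invokes Proposition~\ref{prop:intersection-numbers} to produce a specific $1$-dimensional complete $H$-intersection $C$ with $C \cap H_1$ and $C \cap H_2$ reduced and $k$-isomorphic, and then intersects $n_1H_1 \sim n_2H_2$ with $C$ to conclude $n_1 = n_2$. You skip this detour entirely: you intersect $n_1H_1 \sim n_2H_2$ directly with $H^{\dim X - 1}$ and use the hypothesis $H_1.H^{\dim X-1} = H_2.H^{\dim X-1} > 0$ to force $n_1 = n_2$. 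The paper's route emphasizes that Proposition~\ref{prop:intersection-numbers} is the topological mechanism behind the equality of intersection numbers, which matters for the overall narrative of recognizing these relations from $\abs{X}$ alone; but for the bare logical equivalence stated in the corollary, your elementary argument via linearity and positivity of ample intersections is all that is needed.
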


\begin{proof}[Sketch of proof]
The `only if' is clear because intersection numbers are insensitive to linear equivalence. For the `if,' suppose that $n_1H_1 \sim n_2 H_2$ and use Proposition \ref{prop:intersection-numbers} to find a $1$-dimensional, irreducible complete $H$-intersection $C \subset X$ such that $C \cap H_1$ and $C\cap H_2$ are $0$-dimensional, reduced, and $k$-isomorphic. This $k$-isomorphy and the assumed equality of intersection numbers imply that $n_1 = n_2$, so that $H_1 \sim_\bQ H_2$. 
\end{proof}

In the case when $\dim X \ge 4$, the criterion given by Corollary \ref{cor:ample-lin-eq} is topological thanks to Corollary \ref{cor:intersection-numbers} and Propositions \ref{prop:ample-crit} and \ref{prop:lin-sim}.


\section{Topological pencils of divisors} \label{sec:pencils}

\textsc{Notation.} \emph{In this section, we let $X$ be a geometrically normal, geometrically integral, positive-dimensional projective variety over a field $k$. }

\msk

The basic idea for topologically recognizing linear equivalence between general divisors on $X$ is to first make them ample by adding a multiple of some ample divisor and to then place them into linear pencils with a common member whose general members are irreducible. In some sense  this strategy achieves a reduction to the case of irreducible ample divisors considered in \S\S\ref{sec:ample}--\ref{sec:linear-ample}, and the key for carrying it out is to topologically describe families of divisors  that end up constituting the desired pencils. The central notion is that of a topological pencil that we are going to examine in this section.


\begin{defi} \label{def:base-locus}
For a topological space $T$, a \emph{base locus} of an infinite collection $\cP = \{D_\lambda\}_{\lambda \in \Lambda}$ of subsets $D_\lambda \subset T$, denoted by $\Base(\cP)$, is a closed subset $B \subset T$ such that, for some $\Lambda' \subset \Lambda$ whose complement is finite, we have
\[
D_\lambda \cap D_{\lambda'} \subseteq B \qxq{for all distinct} \lambda, \lambda' \in \Lambda,  \qxq{with} D_\lambda \cap D_{\lambda'} = B \qxq{when} \lambda, \lambda' \in \Lambda'.
\]
\end{defi}

Due to the last requirement, the base locus $\Base(\cP)$ is unique if it exists.

\begin{defi}[\cite{Kol20}*{Def.~105}]
A \emph{topological pencil} on $X$ is a set $\cP = \{D_\lambda\}_{\lambda\in \Lambda}$ of reduced divisors $D_\lambda \subset X$ such that 
\begin{itemize}
\item
all but finitely many $D_\lambda$ are irreducible\uscolon

\item
the $D_\lambda$ jointly cover all the closed points of $X$\uscolon

\item
$B \ce \Base(\cP)$ exists, is of codimension $\ge 2$ in $X$, and each $D_\lambda \setminus B$ is connected. 
\end{itemize}
A topological pencil $\{D_\lambda\}_{\lambda \in \Lambda}$ is \emph{ample} if all but finitely many $D_\lambda$ are ample divisors.
\end{defi}

Since $B$ is of codimension $\ge 2$, the divisors $D_\lambda$ in a topological pencil are connected and pairwise have no common irreducible components. The notions of a topological pencil and of its base depend only on the topological space $\abs{X}$ and, if $\dim X \ge 2$, then, by Proposition~\ref{prop:ample-crit}, so does the ampleness of such a pencil. The following is the principal source of topological pencils.

\begin{exem} \label{eg:alg-pencil}
Let $C$ be a normal, projective, integral $k$-curve  and let $\pi \colon X \dashrightarrow C$ be a dominant rational $k$-morphism whose maximal locus of definition is $X \setminus B$ for a closed subset $B \subset X$. Since $X$ is normal and $C$ is projective and nonsingular, $B$ is of codimension $\ge 2$ in $X$ and $\pi|_{X \setminus B}$ is flat. Let $\cP = \{D_\lambda\}$ be the collection of the closures in $X$ of the reduced connected components of $(\pi|_{X\setminus B})\i(c)$ for a variable closed point $c \in C$.  We will now show that $\cP$ is a topological pencil on $X$ with base locus $B$. 

The rational map $\pi$ factors through the normalization $\wt{C}$ of $C$ in the maximal algebraic subextension of $k(X)/k(C)$ and this factorization has the same maximal locus of definition $X\setminus B$. By replacing $C$ by $\wt{C}$, we do not change $\cP$ and may assume that $k(C)$ is algebraically closed in $k(X)$, so that, by \cite{EGAIV2}*{4.5.9}, the generic fiber of $\pi$ is geometrically irreducible. It then follows from \cite{EGAIV3}*{9.7.7} that all but finitely many $D_\lambda$ are irreducible.

Consider the closure $\ov{X} \subset X \times_k C$ of the graph of $\pi|_{X\setminus B}$, which inherits $k$-morphisms 
\[
\xymatrix{
& \ov{X} \ar[ld]_-{b} \ar[rd]^-{\ov{\pi}} \\
X \ar@{-->}[rr]^{\pi} & & C
}
\]
such that $b$ is an isomorphism over $X \setminus B$ and both $\ov{\pi}$ and $b$ are proper. The locus $U$ of $X$ over which $b$ has finite fibers is open (see \cite{SP}*{01TI}), so, since $X$ is normal and $b$ is birational, the finite map $b|_{b\i(U)}$ is an isomorphism. In particular, $U = X\setminus B$ and $B$ consists precisely of the points $x \in X$ such that $\dim(b\i(x)) > 0$. Since $\ov{X} \subset X \times_k C$, for closed such $x$ the map $b\i(x) \ra C$ is finite surjective and $\dim(b\i(x)) = 1$.

The closure in $X$ of a reduced connected component of $(\pi|_{X\setminus B})\i(c)$ is the $b$-image  of the closure in $\ov{X}$ of the corresponding reduced connected component of $(\ov\pi|_{\ov X\setminus b\i(B)})\i(c)$. Moreover, since $C$ is nonsingular, $\ov{\pi}$ is flat and its closed fibers are purely of dimension $\dim X - 1$. Thus, we conclude from the previous paragraph that the $D_\lambda$ jointly cover all the closed points of $X$ and that the base locus of $\cP$ is precisely $B$ (in Definition~\ref{def:base-locus} choose $\Lambda'$ to consist of those closed points $c \in C$ such that the $c$-fibers of $\pi$ and $\ov{\pi}$ are geometrically irreducible). 
\end{exem}

\begin{defi} 
A topological pencil $\cP$ on $X$ is \emph{algebraic} (resp.,~\emph{rational}; resp.,~\emph{linear}) if it is associated to some $C$ and $\pi$ as in Example \ref{eg:alg-pencil} (resp.,~with $C_{\ov{k}} \simeq \bP^1_{\ov{k}}$; resp.,~with $C \simeq \bP^1_k$); an algebraic $\cP$ is \emph{noncomposite} if $\wt{C} = C$ in Example \ref{eg:alg-pencil}, that is, if every finite morphism $C' \ra C$ of normal, projective, integral $k$-curves through which $\pi$ factors is an isomorphism, equivalently, if $k(C)$ is algebraically closed in $k(X)$.
\end{defi}

The following example relates topological pencils and linear equivalence. 

\begin{exem} \label{eg:linear-pencil}
Let $\sL$ be a line bundle on $X$ and let $s_0, s_\infty \in H^0(X, \sL)$ be nonzero global sections. The vanishing loci $D_0 \ce \{s_0 = 0\}$ and $D_\infty \ce \{s_\infty = 0\}$ are linearly equivalent divisors on $X$. If $D_0$ and $D_\infty$ have no common irreducible components, then $s_0$ and $s_\infty$ span a linear pencil whose base locus is $\{s_0 = s_\infty = 0\}$: indeed, to relate to Example \ref{eg:alg-pencil}, it suffices to note that $s_0$ and $s_\infty$ determine a rational map $\pi\colon X \dashrightarrow \bP^1_k$ whose maximal locus of definition is $X \setminus \{s_0 = s_\infty = 0\}$. 

Conversely, for any pair of $k$-points $c, c' \in \bP^1_k$, there is a rational function $f \in k(t)$ that vanishes to order one at $c$ and has a simple pole at $c'$, so any two $k$-fibers of a linear topological pencil $\cP$ are linearly equivalent on $X$. 
\end{exem}

To utilize this example, we need to recognize linearity of algebraic pencils. 



\begin{lemm}[\cite{Kol20}*{Lem.~109}] \label{lem:lin-crit}
Suppose that $k$ is perfect and let $\cP$ be an algebraic topological pencil on~$X$. 
\benum
\m \label{LC-a}
If $\Base(\cP)$ meets the $k$-smooth locus $X^\sm \subset X$, then $\cP$ is rational. 

\m \label{LC-b}
If $\cP$ is rational and $X^\sm(k) \neq \emptyset$, then $\cP$ is linear. 

\m \label{LC-c}
If $\cP$ is rational and the $k$-smooth locus of some $D \in \cP$ contains a nonempty open of some geometrically irreducible closed $k$-subvariety $Y \subset \Base(\cP)$, then $\cP$ is~linear.


\eenum
\end{lemm}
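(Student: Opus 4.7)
All three parts reduce to identifying the field of definition of $C$ from the geometry of $\pi : X \dashrightarrow C$: part \ref{LC-a} aims at genus zero of $C_{\ov k}$, while \ref{LC-b} and \ref{LC-c} further produce a $k$-rational point of $C$ to upgrade $C$ to $\bP^1_k$. By the factorization through $\wt C$ in Example \ref{eg:alg-pencil}, I will assume throughout that $\pi$ is noncomposite, so its generic fiber is geometrically irreducible; this does not affect $\cP$.

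For \ref{LC-a}, the plan is to pick a closed point $x_0 \in B \cap X^\sm$, lift it to some $\ov{x}_0 \in X_{\ov k}$, and blow up to obtain $\wt X_{\ov k} \to X_{\ov k}$ with exceptional divisor $E \simeq \bP^{n - 1}_{\ov k}$, where $n = \dim X$. Since $\wt X_{\ov k}$ is smooth near $E$, the base locus of the pulled-back rational map $\wt \pi_{\ov k} : \wt X_{\ov k} \dashrightarrow C_{\ov k}$ has codimension $\ge 2$, so restricting to $E$ gives a rational map $E \dashrightarrow C_{\ov k}$ defined outside a codim-$\ge 2$ subset. The crux will be that this restriction cannot be constant: a constant value on $E$ would reduce the fiber over $\ov{x}_0$ in the closure of the graph of $\pi_{\ov k}$ to a single point, whence Zariski's main theorem would extend $\pi_{\ov k}$ at $\ov{x}_0$ and contradict $\ov{x}_0 \in B_{\ov k}$. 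A general line $L \simeq \bP^1_{\ov k}$ in $E$ then yields a non-constant morphism $\bP^1_{\ov k} \to C_{\ov k}$, forcing $C_{\ov k} \simeq \bP^1_{\ov k}$.

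For \ref{LC-b}, I take any $x_0 \in X^\sm(k)$. If $x_0 \notin B$, then $\pi(x_0) \in C$ is defined with residue field inside $k(x_0) = k$, so it is a $k$-point of $C$. If instead $x_0 \in B$, the codim-$\ge 2$ condition on $B$ lets me choose $n - 1$ elements of $\fm_{x_0}/\fm_{x_0}^2$ whose common vanishing locus cuts out a smooth $k$-rational curve germ $\gamma$ at $x_0$ transverse to $B$; then $\pi|_\gamma$ is a rational map from a smooth curve to a proper one, extends at $x_0$ by the valuative criterion, and its value $\pi|_\gamma(x_0)$ is a $k$-point of $C$. In either case $C(k) \ne \emptyset$ and the genus-$0$ curve $C$ becomes $\bP^1_k$.

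For \ref{LC-c}, the plan is a Galois-descent argument producing a $k$-rational point of $C$. Let $\eta_Y \in Y \cap D^\sm$ be the generic point of $Y$, and $\eta_Y' \in Y_{\ov k}$ its unique lift, which is $\Gal(\ov k/k)$-fixed because $Y$ is geometrically irreducible. Since $D$ is regular at $\eta_Y$, the base change $D_{\ov k}$ is regular at $\eta_Y'$, so $\eta_Y'$ belongs to a unique irreducible component $\wt D$ of $D_{\ov k}$; Galois invariance of $\eta_Y'$ then forces Galois invariance of $\wt D$. Because $\wt D$ is a divisor and $B_{\ov k}$ has codimension $\ge 2$, the dense connected open $\wt D \setminus B_{\ov k}$ lies in a single piece of the decomposition
\[
(\pi|_{X \setminus B})\i(c_D)_{\ov k} \;=\; \bigsqcup_{\sigma : k(c_D) \hookrightarrow \ov k} \pi_{\ov k}\i(\ov c_\sigma),
\]
so $\pi_{\ov k}$ takes a single value $\ov c_\sigma \in C(\ov k)$ on $\wt D \setminus B_{\ov k}$. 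Galois equivariance of $\pi_{\ov k}$ together with Galois invariance of $\wt D$ then forces $\ov c_\sigma$ to be Galois-fixed, hence $\ov c_\sigma \in C(k)$. The main obstacle will be verifying that $\eta_Y' \in D_{\ov k}^\sm$ so that $\eta_Y'$ meets exactly one irreducible component of $D_{\ov k}$; this is where the $k$-smooth-locus hypothesis on $Y$ is essential, and once it is in hand the Galois descent is formal.
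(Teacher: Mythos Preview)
Your arguments for \ref{LC-b} and \ref{LC-c} are correct and essentially match the paper's: in \ref{LC-b} you are inlining the proof of the Lang--Nishimura theorem (which the paper simply quotes), and your Galois-descent formulation of \ref{LC-c} is a rephrasing of the paper's observation that the $[k(c):k]$ Galois-conjugate components of $D_{\ov k}$ cannot all pass smoothly through the irreducible $Y_{\ov k}$.

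Part \ref{LC-a}, however, has a genuine gap: the claimed implication ``$\wt\pi|_E$ constant $\Rightarrow$ $b^{-1}(\ov x_0)$ is a single point'' is false. Take $X = \bP^2_{\ov k}$ with the pencil of conics $\{YZ = \lambda X^2\}$ and $\ov x_0 = [0{:}0{:}1]$; in the blow-up chart $x = u$, $y = uv$ one finds $\wt\pi = [v : u]$, which is identically $[1{:}0]$ on $E = \{u = 0\}$ away from the single residual base point $(0,0)$, and yet $b^{-1}(\ov x_0) \cong \bP^1$. The rational map $\wt X \dashrightarrow \ov X$ is simply undefined at that residual point, and all of $b^{-1}(\ov x_0)$ hides there. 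A single blow-up need not resolve the indeterminacy of $\pi$, and there is no direct link between the value of $\wt\pi$ on the first exceptional divisor and the shape of $b^{-1}(\ov x_0)$. The paper instead applies a theorem of Abhyankar---positive-dimensional fibers of a proper birational morphism onto a regular target contain nonconstant rational curves---to the normalization $\wt{\ov X} \to X$ restricted over $X^\sm$: since $b^{-1}(\ov x_0) \to C_{\ov k}$ is finite surjective by Example~\ref{eg:alg-pencil}, the rational curve so produced yields a nonconstant morphism $\bP^1_{\ov k} \to C_{\ov k}$, forcing $C_{\ov k} \cong \bP^1_{\ov k}$.
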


\bpf 
Let $C$, $\pi$, and $\ov{X}$ be as in Example \ref{eg:alg-pencil} and let $\wt{\ov{X}}$ be the normalization of $\ov{X}$. The role of the perfectness of $k$ is to ensure that the normal $k$-curve $C$ is $k$-smooth.
\benum
\m
By a result of Abhyankar \cite{Kol96}*{VI.1.2}, the positive dimensional fibers of a proper modification $Y' \ra Y$ of excellent, normal schemes with $Y$ regular contain nonconstant rational curves (see also \cite{Bha12a}*{Rem.~4.5}). We apply this to the restriction of the morphism $\wt{\ov{X}} \ra X$ to $X^\sm$: by using the assumption on $\Base(\cP)$, we conclude that each positive-dimensional fiber of $\ov{X} \ra X$ receives a nonconstant morphism from a rational curve. It then follows from Example \ref{eg:alg-pencil} that $C_{\ov{k}}$ also receives such a morphism, so that $C_{\ov{k}}\simeq \bP^1_{\ov{k}}$. Thus, $\cP$ is rational, as desired.

\m
If $X^\sm(k)\neq \emptyset$, then the Lang--Nishimura theorem \cite{Poo17}*{3.6.11} implies that $C(k) \neq \emptyset$. Since, by assumption, $C_{\ov{k}} \simeq \bP^1_{\ov{k}}$, we then 	conclude that $C \simeq \bP^1_k$. 

\m
Suppose that $D$ arises from a closed point $c \in C$ as in Example \ref{eg:alg-pencil}.  It suffices to argue that $[k(c) : k] = 1$, since then $C(k) \neq \emptyset$ and $C \simeq \bP^1_k$ (compare with \ref{LC-b}). Since $k$ is perfect, Example \ref{eg:alg-pencil} applied over $\ov{k}$ shows that 
$D_{\ov{k}}$ is a $\Gal(\ov{k}/k)$-orbit of $[k(c) : k]$ closures of connected components of fibers of $\pi$ over $\ov{k}$. Thus, if $D$ is $k$-smooth at the generic point of some geometrically irreducible $k$-subvariety $Y \subset \Base(\cP)$, then the closures of distinct connected components of fibers of $\pi$ over $\ov{k}$ cannot simultaneously contain a nonempty open of $Y_{\ov{k}}$, so $[k(c) : k] = 1$.
\qedhere
\eenum
\epf

We turn to the key question of topologically recognizing when a topological pencil is algebraic. The most basic example is the following case of an empty base locus.

\begin{exem}
Suppose that $k$ is algebraically closed. Then, by \cite{BPS16}*{Thm.~1.1}, every topological pencil $\{D_\lambda\}_{\lambda \in \Lambda}$ whose base locus is empty, in other words, such that $D_\lambda \cap D_{\lambda'} = \emptyset$ for $\lambda \neq \lambda'$, is algebraic. Indeed, \emph{loc.~cit.}~says that there are a smooth, projective $k$-curve $C$ and a surjective $k$-morphism $\pi\colon X \ra C$ with connected fibers such that each $D_\lambda$ is contained in a (closed) fiber of $\pi$. Since the $D_\lambda$ jointly cover the closed points of $X$, the set of closed fibers of $\pi$ is then precisely $\{D_\lambda\}_{\lambda \in \Lambda}$.
\end{exem}


To proceed beyond empty base loci, it is useful to first note that algebraic topological pencils are determined by infinitely many members as follows. In essence, this is the basic reduction mechanism for reaching irreducible ample divisors from general divisors.

\begin{lemm} \label{lem:infinite-intersection}
For topological pencils $\cP$ and $\cP'$ on $X$ with $\cP'$ algebraic, if the set $\cP \cap \cP'$ of those divisors $D \subset X$ that belong to both $\cP$ and $\cP'$ is infinite, then $\cP = \cP'$. 
\end{lemm}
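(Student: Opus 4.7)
The plan is to exploit that $\cP'$ comes from a dominant rational map $\pi' \colon X \dashrightarrow C'$ (with indeterminacy locus $B' \subseteq X$ of codimension $\ge 2$) as in Example~\ref{eg:alg-pencil}, together with the pencil axioms of $\cP$, to force each $D \in \cP$ to be a reduced connected component of some fiber of $\pi'$, and conversely. A preliminary observation is $\Base(\cP) = \Base(\cP') =: B$: by Definition~\ref{def:base-locus}, cofinitely many elements of $\cP$ (resp.~$\cP'$) pairwise intersect in $\Base(\cP)$ (resp.~$\Base(\cP')$); since $\cP \cap \cP'$ is infinite, it contains two distinct elements lying in both cofinite subsets, and their intersection is then $\Base(\cP) = \Base(\cP')$.

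The first main step is to show that every irreducible component $D^i$ of any $D \in \cP$ is an irreducible component of some fiber of $\pi'$. Suppose instead that $\pi'|_{D^i} \colon D^i \dashrightarrow C'$ is dominant; then for $c$ in a dense open subset of $C'$, the set $\pi'^{-1}(c) \cap D^i$ has dimension $\dim X - 2$. Apply this with $c = c_n$ for the infinitely many $c_n$ whose fiber $D_n \ce \pi'^{-1}(c_n)$ is an irreducible generic fiber lying in $\cP \cap \cP'$; then $D^i \cap D_n \subseteq D \cap D_n \subseteq B$ (pencil axiom for $\cP$, using $D \neq D_n$ for almost all $n$) has dimension $\dim X - 2$. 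But $B$, being of codimension $\ge 2$, has only finitely many irreducible components of dimension $\dim X - 2$; each such $B_0$ is irreducible, so $\pi'(B_0)$ (where defined) is a single closed point of $C'$, forcing $B_0 \subseteq \pi'^{-1}(c_n)$ for at most one~$n$. Pigeonholing infinitely many $n$ into finitely many components yields a contradiction, so $\pi'|_{D^i}$ is not dominant, and $D^i$ is an irreducible component of some fiber~$\pi'^{-1}(c_i)$.

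The main obstacle is then to upgrade this to: $D$ coincides set-theoretically with a reduced connected component of some fiber. Let $E_{c_i}$ be the reduced connected component of $\pi'^{-1}(c_i) \setminus B'$ containing $D^i$. For any component $D^j$ of $E_{c_i}$ adjacent to $D^i$ (so $D^i \cap D^j \neq \emptyset$ in $X \setminus B'$), pick a general closed point $x \in D^j$ lying outside $B$, outside $D^i$, and outside all other irreducible components of $\pi'^{-1}(c_i)$; such $x$ exists since these form proper closed subsets of the irreducible $D^j$. Coverage gives a unique $D_\mu \in \cP$ through $x$, and the previous step applied to $D_\mu$ shows some irreducible component of $D_\mu$ is a component of $\pi'^{-1}(c_i)$ through~$x$; by genericity of $x$, this component is $D^j$, so $D^j \subseteq D_\mu$. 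If $D_\mu \neq D$, then $D \cap D_\mu \supseteq D^i \cap D^j$ is nonempty in $X \setminus B$, contradicting $D \cap D_\mu \subseteq B$. Hence $D_\mu = D$ and $D^j \subseteq D$; propagating along the connected incidence graph of $E_{c_i}$ gives $E_{c_i} \subseteq D$. Writing $D$ set-theoretically as $\bigcup_i E_{c_i}$, the pieces $E_{c_i}$ and $E_{c_j}$ are disjoint in $X \setminus B'$ when $c_i \neq c_j$, so the connectedness of $D \setminus B$ (a pencil axiom) forces all $c_i$ to coincide: $D = E_c \in \cP'$.

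For the reverse inclusion $\cP' \subseteq \cP$, given $D' \in \cP'$ pick a general closed point $x \in D' \setminus B$; coverage yields a unique $D_\lambda \in \cP$ through $x$, which by the preceding step equals $E_{c_\lambda}$ for some $c_\lambda \in C'$. As $x \in D' \subseteq \pi'^{-1}(\pi'(x))$, necessarily $c_\lambda = \pi'(x)$, and both $D_\lambda$ and $D'$ are the closure in $X$ of the unique reduced connected component of $\pi'^{-1}(\pi'(x)) \setminus B'$ through $x$, so $D' = D_\lambda \in \cP$. This completes the proof that $\cP = \cP'$.
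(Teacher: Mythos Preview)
Your argument is essentially correct, but it contains a misstep in the first main step and is considerably more elaborate than the paper's proof.

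The misstep: once you know $D^i \cap D_n \subseteq B$ (pencil axiom) and $\pi'^{-1}(c_n) \cap D^i$ has dimension $\dim X - 2$ (from your dominance assumption), the contradiction is already in hand. Indeed, by Example~\ref{eg:alg-pencil} the indeterminacy locus $B'$ of $\pi'$ equals $\Base(\cP') = B$, so $\pi'^{-1}(c_n) \cap D^i$ lies simultaneously in $B$ and in $X \setminus B$, hence is empty. Your subsequent pigeonhole is both unnecessary and incorrect: the assertion that ``$\pi'(B_0)$ is a single closed point'' is vacuous because $B_0 \subseteq B$ lies entirely in the indeterminacy locus of $\pi'$, and in fact Example~\ref{eg:alg-pencil} shows that every top-dimensional component of $B$ lies in \emph{all} the $D_n$ for $n$ in the cofinite set $\Lambda'$, not in at most one. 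Simply delete the pigeonhole and Step~1 stands.

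The paper bypasses your Step~2 (the incidence-graph propagation) entirely by first replacing $C'$ with its normalization in the algebraic closure of $k(C')$ in $k(X)$, which does not change the pencil $\cP'$ but makes the generic fiber of $\pi'$ geometrically irreducible. Then cofinitely many $D_n \setminus B$ with $D_n \in \cP \cap \cP'$ are \emph{full} fibers $\pi'^{-1}(c_n)$, and one argues directly: the connected set $D \setminus B$ is disjoint from each of these, so $\pi'(D \setminus B)$ misses infinitely many closed points of $C'$ and is therefore a single closed point. Since both $\cP$ and $\cP'$ partition the closed points of $X \setminus B$ into connected closed pieces and the $\cP$-partition now refines the $\cP'$-partition, connectedness of the members of $\cP'$ forces equality. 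Your route, working component by component without the normalization trick, reaches the same conclusion but at the cost of the adjacency argument; the paper's reduction is what buys the shortcut.
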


\begin{proof}
The infinitude of $\cP \cap \cP'$ implies that $\Base(\cP) = \Base(\cP')$, so we let $B$ be this common base locus and let $\pi \colon X \dashrightarrow C$ be a dominant rational morphism that gives rise to $\cP'$ as in Example \ref{eg:alg-pencil}. As in that example, $X\setminus B$ is the maximal locus of definition of $\pi$ and we may assume that the generic fiber of $\pi$ is geometrically irreducible. The complements $D \setminus B$ for $D \in \cP$ are connected and pairwise disjoint, so the infinitude of $\cP \cap \cP'$ ensures that each $D \setminus B$ lies in a single fiber of $\pi$. Since topological pencils cover the closed points of $X$, it then follows that $D \in \cP'$ and that $\cP = \cP'$, as desired.
\end{proof}

The following is a topological criterion for algebraicity of topological pencils. 

\bprop[\cite{Kol20}*{Prop.~107}] \label{prop:pencil-algebraic}
Suppose that $k$ is infinite. 
A topological pencil $\cP = \{D_\lambda\}_{\lambda \in \Lambda}$ on $X$ is algebraic if and only if for some infinite subset $\Lambda' \subset \Lambda$ and every (or merely some) irreducible ample divisor $H \subset X$, the intersection numbers
\[
D_\lambda . H^{\dim X - 1} \qxq{are all equal for} \lambda \in \Lambda';
\]
thus, if $\Char(k) = 0$ and $\dim(X) \ge 4$, then the algebraicity of $\cP$ depends only~on~$\abs{X}$. 
\eprop

\begin{proof}[Sketch of proof]
The last assertion that concerns topological recognition of algebraicity of $\cP$ follows from the rest and from Proposition \ref{prop:ample-crit} and Corollary~\ref{cor:intersection-numbers}. 

For the rest, when $\dim(X) = 1$, every $\cP$ is algebraic and the claim is that $X$ has infinitely many closed points of the same degree over $k$. This holds because there is a finite $k$-morphism $X \ra \bP^1_k$ and $\bP^1(k)$ is infinite. Thus, we may assume that $\dim X \ge 2$.

We begin with  the simpler `only if' direction and assume that $\cP$ is algebraic, associated to a $\pi \colon X \dashrightarrow C$ as in Example \ref{eg:alg-pencil} such that all but finitely many fibers  of $\pi$ are geometrically irreducible.  By the $1$-dimensional case, there is an infinite set $\Lambda'$ of closed points of $C$ of the same degree over $k$ with irreducible $\pi$-fibers. By using the Bertini theorem, for every irreducible ample divisor $H \subset X$ and every $\lambda, \lambda' \in \Lambda'$ we may find a complete $H$-intersection curve $Y \subset X$ for which $Y \cap \Base(\cP) = \emptyset$, both $Y \cap D_\lambda$ and $Y \cap D_{\lambda'}$ are $0$-dimensional, and $Y$ is flat over a neighborhood of $\lambda$ and $\lambda'$ in $C$. Then 
\[
D_\lambda . H^{\dim X - 1} = D_{\lambda'} . H^{\dim X - 1}
\]
because both these intersection numbers are equal to the product of the degree of $Y$ over $C$ with the common degree of the points in $\Lambda'$. 

For the converse, 
we fix a single $H$ and let $d$ be the common value $D_\lambda . H^{\dim X - 1}$ for $\lambda \in \Lambda'$ and consider the Chow $k$-scheme $\mathrm{Chow}_{X,\, d}$ that parametrizes those effective divisors $D \subset X$ that satisfy $D . H^{\dim X - 1} = d$, so that $\mathrm{Chow}_{X,\, d}$ is projective over $k$ (see \cite{Kol96}*{Ch.~I}). The $D_\lambda$ for $\lambda \in \Lambda'$  give infinitely many closed points on $\mathrm{Chow}_{X,\, d}$, so, since $\mathrm{Chow}_{X,\, d}$ is of finite type over $k$, their closure contains a positive-dimensional irreducible closed subscheme $C \subset \mathrm{Chow}_{X,\, d}$. Consider the universal family of divisors $\pi\colon E \ra C$ base changed from $\mathrm{Chow}_{X,\, d}$, as well as the resulting commutative diagram
\[
\xymatrix{
& E \ar[d]_-{\pi} \ar[r]^-i & X \\
\mathrm{Chow}_{X,\, d} \ar@{}[r]|-{\text{\mbox{\large$\supset$}}} &  C
}
\]
for which the $D_\lambda$ for $\lambda \in \Lambda'$ appear as fibers of $\pi$. It suffices to argue that there is a nonempty open $X^0 \subset X$ such that the $i\i(x)$ for a dense set of closed points $x \in X^0$ are singletons. Then, up to a power of Frobenius if $\Char k > 0$, the map $i$ will be birational, $C$ will be a curve by counting dimensions, $\pi$ will give rise to an algebraic topological pencil $\cP'$ on $X$ as in Example \ref{eg:alg-pencil}, and Lemma~\ref{lem:infinite-intersection} will imply the desired $\cP = \cP'$.

For the claim about $X^0$, first of all, the image of $i$ contains infinitely many distinct divisors $D_\lambda$ for $\lambda \in \Lambda'$, so the Chevalley constructibility theorem \cite{EGAIV1}*{1.8.4} implies that $i$ is dominant. We then let 
\[
X^0 \subset X \setminus \Base(\cP)
\]
be a nonempty open over which $i$ is flat (see \cite{EGAIV3}*{11.2.6 (ii)}) and let $x$ range over the closed points of $D_\lambda\cap X^0$ for $\lambda \in \Lambda'$. Suppose that for such an $x \in D_\lambda \cap X^0$ the fiber $i\i(x)$ is not a singleton. Then the divisor $i\i(D_\lambda  \cap X^0) \subset i\i(X^0)$ meets some $\pi$-fibers $E_c$ over closed points $c \in C$ such that $E_c$ is different from the $\pi$-fiber $D_\lambda$. By construction, the effective divisors $E_c$ on $X$ are all algebraically equivalent to $D_\lambda$, so, since the latter is irreducible, the nonempty intersections $E_c \cap D_\lambda$ have pure codimension $2$ in $X$. This means that the intersections $i\i(D_\lambda \cap X^0) \cap E_c$ are nowhere~dense~in~$E_c$. 

Since $i\i(D_\lambda \cap X^0) \subset i\i(X^0)$ is of pure codimension $1$, another application of the Chevalley constructibility theorem then shows that the map $i\i(D_\lambda \cap X^0) \ra C$ given by $\pi$ is dominant and its image contains a nonempty open $C^0 \subset C$. However, this is impossible: by construction of $C$, there is a closed point $c \in C^0$ such that $E_c = D_{\lambda'}$ for some $\lambda' \in \Lambda' \setminus \{ \lambda\}$, and $i\i(D_\lambda \cap X^0)$ cannot meet $E_c$ because $D_\lambda \cap D_{\lambda'}$ lies in $\Base(\cP)$, which does not meet $X^0$. Thus, $i\i(x)$ is indeed a singleton, as desired.
\end{proof}

\begin{coro}
If $k$ is uncountable (equivalently, if $\abs{X}$ uncountable), then every topological pencil on $X$ is algebraic.
\end{coro}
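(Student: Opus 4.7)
The strategy is to apply Proposition \ref{prop:pencil-algebraic}: for a given topological pencil $\cP = \{D_\lambda\}_{\lambda \in \Lambda}$ on $X$ and an irreducible ample divisor $H \subset X$, it suffices to produce an infinite $\Lambda' \subset \Lambda$ on which the intersection numbers $D_\lambda . H^{\dim X - 1}$ are all equal. Since these numbers lie in the countable set $\bZ_{\ge 0}$, the existence of such a $\Lambda'$ follows by pigeonhole as soon as one shows that $\Lambda$ itself is uncountable. The plan therefore reduces to proving the uncountability of $\Lambda$.

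To that end, I would exhibit an irreducible closed curve $C \subset X$ with $C \not\subset B \ce \Base(\cP)$. When $\dim X = 1$ we may take $C = X$, since the codimension condition on $B$ then forces $B = \emptyset$. When $\dim X \ge 2$, such a $C$ may be built as a general complete intersection of $\dim X - 1$ hyperplane sections of a projective embedding of $X$, chosen by a dimension count (using $\dim B \le \dim X - 2$) to be disjoint from $B$. In either case $C$ is proper and positive-dimensional over the uncountable field $k$, so a finite morphism $C \to \bP^1_k$---coming, after passing to the normalization, from any nonconstant rational function on $C$---pulls back the uncountably many $k$-rational points of $\bP^1_k$ to an uncountable set of closed points of $C$.

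By the defining properties of a topological pencil, the $D_\lambda$ cover all closed points of $X$, and $D_\lambda \cap D_{\lambda'} \subseteq B$ for all distinct $\lambda, \lambda' \in \Lambda$. Since $C \not\subset B$, the curve $C$ lies in at most one of the $D_\lambda$, so $C \cap D_\lambda$ is finite for all but at most one index $\lambda$. The uncountable set of closed points of $C$ cannot be covered by countably many finite subsets together with at most one divisor fully containing $C$, which forces $\Lambda$ to be uncountable, as required. Finally, the parenthetical equivalence between the uncountability of $k$ and of $\abs{X}$ follows by the same pullback argument applied to a finite surjection $X \to \bP^{\dim X}_k$.
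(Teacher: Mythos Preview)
Your overall plan matches the paper's: show that the index set $\Lambda$ is uncountable by means of a curve meeting the members of $\cP$ in finite sets, then pigeonhole to extract an infinite $\Lambda'$ on which the intersection numbers agree, and conclude via Proposition~\ref{prop:pencil-algebraic}. Your pigeonhole step is in fact slightly cleaner than the paper's: you observe directly that the values $D_\lambda . H^{\dim X - 1}$ lie in the countable set $\bZ_{>0}$, whereas the paper detours through the countability of the N\'{e}ron--Severi group $\NS(X)$ to find infinitely many numerically equivalent $D_\lambda$.

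There is, however, a genuine gap in your uncountability argument for $\Lambda$ when $\dim X \ge 2$. You build an irreducible curve $C$ disjoint from $B \ce \Base(\cP)$ and note that $C$ lies in at most one $D_\lambda$. But nothing you wrote rules out $C \subset D_{\lambda_0}$ for some $\lambda_0$: if that happens, then for every $\lambda \neq \lambda_0$ one has $C \cap D_\lambda \subset D_{\lambda_0} \cap D_\lambda \subset B$, hence $C \cap D_\lambda = \emptyset$, and the closed points of $C$ are covered by the single member $D_{\lambda_0}$. Your sentence ``cannot be covered by countably many finite subsets together with at most one divisor fully containing $C$'' is then simply false, and no contradiction with countability of $\Lambda$ arises. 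The paper avoids this by choosing $C$ through a fixed closed point $x \in D_\lambda \setminus B$ while also arranging that $C$ properly meets $D_\lambda$ (general hyperplanes through $x$ do this); disjointness from $B$ then forces $C \not\subset D_{\lambda'}$ for every $\lambda'$, so each $C \cap D_{\lambda'}$ is genuinely finite. An equally simple patch for your argument: require $C$ to pass through two closed points of $X \setminus B$ lying in distinct members of $\cP$.
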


\begin{proof}
To see the parenthetical equivalence it suffices to note that, by Noether normalization, $\abs{X}$ is uncountable if and only if $|\bA^{\dim(X)}_k|$ is uncountable. Suppose that $\abs{X}$ is uncountable, let $\cP = \{D_\lambda\}_{\lambda \in \Lambda}$ be a topological pencil on $X$, and fix a $\lambda \in \Lambda$ and a closed point $x \in D_\lambda \setminus \Base(\cP)$. Since $\Base(\cP)$ is of codimension $\ge 2$ in $X$, by cutting $D_\lambda$ by sufficiently general hyperplanes passing through $x$ supplied by the Bertini theorem, we may build an irreducible curve $C \subset X$ that properly meets $D_\lambda$ but does not meet $\Base(\cP)$. Then $C$ meets each $D_{\lambda'}$ in finitely many points, to the effect that, since $\abs{C}$ is uncountable and the $D_{\lambda'}$ cover the closed points of $X$, the set $\Lambda$ is also uncountable. On the other hand, the N\'{e}ron--Severi group $\NS(X)$ is countable (see \cite{SGA6}*{XIII, Thm.~5.1}). Thus, there is an infinite subset $\Lambda' \subset \Lambda$ such that the $D_{\lambda'}$ for $\lambda' \in \Lambda$ are pairwise algebraically  equivalent, and so also pairwise numerically equivalent (see \cite{SGA6}*{XIII, Thm.~4.6}). Proposition~\ref{prop:pencil-algebraic} then shows that the pencil $\cP$ is algebraic. 
\end{proof}


\section{Recovering linear equivalence of divisors} \label{sec:linear}

\textsc{Notation.} \emph{In this section, we let $X$ be a geometrically normal, geometrically integral, positive-dimensional projective variety over an infinite field $k$. }

\msk

We are ready to describe Koll\'{a}r's topological recognition of linear equivalence of divisors on $X$. The following lemma allows us to only consider reduced divisors. We recall from \S\ref{sec:master} that, by definition, reduced divisors are assumed to be effective. 

\begin{lemm} \label{lem:reduced-suffice}
For the subgroup $\sR \subset \Div(X)$ generated by the differences of linearly equivalent reduced divisors, every class in $\Div(X)/\sR$ is represented by a difference of reduced divisors. In particular, $\sR$ is the subgroup of all divisors linearly equivalent to~$0$.
\end{lemm}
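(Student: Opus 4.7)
The plan is to prove the first (representation) assertion and then derive the ``in particular'' clause formally. The inclusion $\sR \subseteq \{D : D \sim 0\}$ is immediate from the definition of $\sR$. Conversely, granted the representation, any $D \sim 0$ satisfies $D \equiv R_1 - R_2 \pmod{\sR}$ with $R_1, R_2$ reduced; then $R_1 \sim R_2$, so $R_1 - R_2 \in \sR$ by definition, and hence $D \in \sR$.

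For the representation, I write $D = A - B$ with $A, B$ effective, and for an effective divisor $E = \sum n_i E_i$ (distinct prime components $E_i$, $n_i \ge 1$) I set $\epsilon(E) := \sum (n_i - 1)$, so $\epsilon(E) = 0$ iff $E$ is reduced. The argument proceeds by induction on the total excess $\epsilon(A) + \epsilon(B)$. The base case $\epsilon(A) = \epsilon(B) = 0$ is immediate: $A, B$ are reduced, so $D = A - B$ is already a difference of reduced divisors.

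For the inductive step, assume $\epsilon(A) \ge 1$ after possibly relabelling, so some prime component $E_j$ of $A$ appears with multiplicity $n_j \ge 2$. Fix an ample divisor $H$ on $X$. For $m$ sufficiently large, both linear systems $|E_j + mH|$ and $|mH|$ are base-point-free and of dimension $\ge 1$; since $k$ is infinite and $X$ is geometrically normal and geometrically integral, the Bertini theorem (\cite{Jou83}*{6.10}) furnishes geometrically integral---in particular reduced and irreducible---members of each, and the condition ``contains a given prime divisor as a component'' cuts out a proper closed subset of the linear system. Hence I may choose reduced irreducibles $R_1 \sim E_j + mH$ and $R_2 \sim mH$ that are distinct from each other and from every prime component of $A + B$. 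Then $E_j + R_2 \sim R_1$, with $E_j + R_2$ reduced (the sum of two distinct prime divisors) and $R_1$ reduced, so $E_j + R_2 - R_1 \in \sR$; equivalently, $E_j \equiv R_1 - R_2 \pmod{\sR}$. This yields
\[
D \;\equiv\; (A - E_j + R_1) - (B + R_2) \pmod{\sR}.
\]
The multiplicity of $E_j$ in the new positive part drops from $n_j$ to $n_j - 1 \ge 1$ while $R_1$ contributes a fresh component of multiplicity $1$, giving $\epsilon(A - E_j + R_1) = \epsilon(A) - 1$; similarly $\epsilon(B + R_2) = \epsilon(B)$, so the total excess strictly decreases and the induction closes.

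The conceptual heart is the ``trade'' $E_j \equiv R_1 - R_2 \pmod{\sR}$, which removes one unit of excess from the positive side without creating any on the negative side. The only real obstacle is to secure Bertini representatives $R_1, R_2$ that are simultaneously reduced, irreducible, and disjoint (as prime divisors) from a prescribed finite list of components, which is routine under the hypotheses on $X$ and $k$ in this section.
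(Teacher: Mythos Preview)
Your proof is correct and follows essentially the same route as the paper: both hinge on the Bertini ``trade'' $E_j \equiv R_1 - R_2 \pmod{\sR}$ (the paper's $D \sim D_1 - D_2$ with $D + D_2$ and $D_1$ reduced), and your explicit induction on the total excess $\epsilon(A)+\epsilon(B)$ is a clean formalization of the paper's more informal ``iteratively applying this with $D'$ ranging over the $D_i$ from preceding steps.'' One cosmetic remark: you take $R_1,R_2$ geometrically integral, which via \cite{Jou83}*{6.10} needs $\dim X \ge 2$, whereas the paper only asks for geometrically \emph{reduced} members---that weaker conclusion already suffices for your excess bookkeeping and also covers the curve case permitted by the section's standing hypothesis.
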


\bpf
The last assertion follows from the rest because the difference of reduced divisors lies in $\sR$ if and only if this difference is linearly equivalent to $0$. For the rest, every divisor is a sum of irreducible divisors (with multiplicities), so it suffices to show that every irreducible divisor $D \subset X$ is linearly equivalent to $D_1 - D_2$ for some reduced divisors $D_i \subset X$ that share no irreducible components with divisors $D' \subset X$ in some fixed finite set containing $D$: then $D$ and $D_1 - D_2$ will agree in $\Div(X)/\sR$ and, by iteratively applying this with $D'$ ranging over the $D_i$ from preceding steps, we will represent every class in $\Div(X)/\sR$ by a difference of reduced divisors. 

To find a desired linear equivalence $D \sim D_1 - D_2$, we first choose a very ample divisor $H \subset X$ and an $m > 0$ such that $D + mH$ is also very ample (see \cite{Har77}*{II, Exercise~7.5}). Since $k$ is infinite, the Bertini theorem \cite{Jou83}*{6.10} then supplies our desired geometrically reduced divisors $D_i$ with $D_1 \sim D + mH$ and $D_2 \sim mH$.
\epf

As for linear equivalence of reduced divisors, the following is the key criterion. 

\bprop[\cite{Kol20}*{Thm.~115}] \label{prop:main-lin-equiv}
Suppose that $k$ is perfect and $\dim X \ge 3$, and let $H \subset X$ be an irreducible ample divisor. Reduced divisors $D_1, D_2 \subset X$ are linearly equivalent if and only if for every large enough closed subset $\Sigma \subset X$ of codimension $\ge 2$ and every integral curve $C \subset X$ not in $D_1 \cup D_2 \cup \Sigma$, there are 
\begin{itemize}
\item
algebraic topological pencils $\cP_1$ and $\cP_2$ with $C \subset \Base(\cP_i)$;

\m
irreducible divisors $D', E' \subset X$ not in $D_1 \cup D_2$ with $E'$ ample and containing $C$;

\item
an irreducible ample divisor $E \subset X$ with $(D_i + D' + E')\setminus (E \cup \Sigma )$ 
connected;

\end{itemize}
such that 
\begin{itemize}
\m
$D_i + D' + E'$ and $E$ lie in $\cP_i$ (so $E$ also contains $C$);

\m
all but finitely many closed points of $C$ lie in the $k$-smooth loci $E^\sm$ and $E'^\sm$; 

\m
$D_i + D' + E'$ and $E$ lie in the subset of those $D \in \cP_i$ for which the function $D \mapsto D . H^{\dim X - 1}$ takes the minimal value attained infinitely many times on $\cP_i$.
\end{itemize}
\eprop

\begin{proof}[Sketch of proof]
We begin with the simpler `only if' direction, suppose that $D_1$ and $D_2$ are linearly equivalent, and choose $\Sigma$ to contain $X \setminus X^\sm$. We then choose an irreducible ample divisor $D'$ that shares no components with $D_1$ and $D_2$, does not contain $C$, and is such that the $\sO(D_i) \tensor_{\sO_X} \sO(D')$ are generated by global sections. This makes the reduced divisors $D_i + D'$ basepoint free, so also Cartier. Moreover, since $k$ is perfect, the integral curve $C$ is generically $k$-smooth. We may then use the Bertini theorem \cite{KA79}*{Thm.~7} (which uses the assumption on $\dim(X)$) to find an irreducible ample divisor $E'$ that contains $C$, is $k$-smooth at the generic point of $C$, properly meets every irreducible component of $D_i + D'$, does not contain any irreducible component of $\Sigma$, and is such that the $D_i + D' + E'$ are very ample (see \cite{Har77}*{II, Exercise 7.5 (d)}). Granted that we make sure (as we may) that $E'$ is sufficiently ample, we may then apply the Bertini theorem \cite{KA79}*{Thm.~7} again, this time with the ample line bundle 
\[
\sO(D_1 + D' + E') \simeq \sO(D_2 + D' + E'),
\]
to find an irreducible ample divisor $E \subset X$ with $E \sim D_i + D' + E'$ such that $E$ contains $C$, is $k$-smooth at the generic point of $C$, and does not contain any irreducible component of $D_i + D' + E'$ nor any irreducible component of the intersections between $E'$ and the irreducible components of the $D_i + D'$. The $(D_i + D' + E')\setminus(E \cup \Sigma)$ are then connected, and we let $\cP_i$ be the linear pencil spanned by $D_i + D' + E'$ and $E$ as in Example~\ref{eg:linear-pencil}. By construction, these $D'$, $E'$, $\cP_i$, and $E$ satisfy all the requirements (to check the requirement about the intersection numbers, one uses the fact that both $D_i + D' + E'$ and $E$ are $k$-fibers of the pencil and argues with a complete $H$-intersection curve $Y$ analogous to the one used in the proof of Proposition \ref{prop:pencil-algebraic}).

For the `if' direction, we may assume that $\Sigma$ contains $X \setminus X^\sm$ and use the Bertini theorem \cite{Jou83}*{Thm.~6.10} to choose $C$ to be a generically smooth, geometrically integral complete $H$-intersection. With these choices, by Lemma \ref{lem:lin-crit}~\ref{LC-a}, the pencils $\cP_i$ are rational and, by its part \ref{LC-c}, then they are even linear. At this point one uses the the condition that involves the degree function $D \mapsto D . H^{\dim X - 1}$ to check that both $D_i + D' + E'$ and $E$ are $k$-fibers of the pencil $\cP_i$ (we omit the details of this step here). It then follows from Example~\ref{eg:linear-pencil} that $D_i + D' + E' \sim E$, so that also $D_1 \sim D_2$.
\end{proof}

\begin{coro}[\cite{Kol20}*{Thm.~116}]
Suppose that $\Char(k) = 0$ and $\dim X \ge 4$. The topological space $\abs{X}$ determines linear equivalence of divisors on $X$.
\end{coro}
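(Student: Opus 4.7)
The plan is to combine Lemma \ref{lem:reduced-suffice} with the criterion of Proposition \ref{prop:main-lin-equiv} and to verify that each ingredient in that criterion is topologically intrinsic under the present hypotheses. First, by Lemma \ref{lem:reduced-suffice}, the linear equivalence relation on all of $\Div(X)$ is generated by the pairs of linearly equivalent reduced divisors, and a reduced divisor is precisely the closure in $|X|$ of a finite set of codimension-$1$ points. Thus reduced divisors (and formal sums of prime divisors realized as unions of supports, such as $D_i+D'+E'$) form a topologically distinguished class, and it suffices to read off linear equivalence between reduced divisors from $|X|$.

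Next I would apply Proposition \ref{prop:main-lin-equiv}, which is available since $\Char k=0$ makes $k$ perfect and $\dim X\ge 4\ge 3$. Routine ingredients of its criterion are manifestly topological: codimension, irreducibility, reducedness, membership, connectedness, and the quantifier ``all but finitely many closed points of $C$''. The genuinely algebraic ingredients are topological by earlier results of the paper: ampleness of an irreducible divisor (for the fixed $H$ and for $E$ and $E'$) by Proposition \ref{prop:ample-crit}; linear similarity of irreducible ample divisors by Proposition \ref{prop:lin-sim}, and hence the notion of a complete $H$-intersection by Corollary \ref{cor:H-sci}; the ratios of intersection numbers $D_\lambda.H^{\dim X-1}$ between prime divisors by Corollary \ref{cor:intersection-numbers} (using $\dim X\ge 4$), which in particular encodes the requirement that $D_i+D'+E'$ and $E$ realize on $\cP_i$ the minimal value of $D\mapsto D.H^{\dim X-1}$ and that this minimum be attained infinitely often; and algebraicity of a topological pencil by Proposition \ref{prop:pencil-algebraic}, which again uses $\Char k=0$ and $\dim X\ge 4$.

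The main obstacle is the requirement that all but finitely many closed points of $C$ lie in the $k$-smooth loci $E^\sm$ and $E'^\sm$, because $|X|$ does not a priori distinguish the nonsmooth locus $X\setminus X^\sm$. The way around this is to exploit the existential quantifier on $\Sigma$ in Proposition \ref{prop:main-lin-equiv}: I rewrite ``for every large enough closed $\Sigma\subset X$ of codimension $\ge 2$\dots'' topologically as ``there exists a closed $\Sigma_0\subset|X|$ of codimension $\ge 2$ such that for every closed $\Sigma\supseteq\Sigma_0$ of codimension $\ge 2$\dots''. Because $X$ is normal, $X\setminus X^\sm$ has codimension $\ge 2$, so I am free to enlarge $\Sigma_0$ to contain it; then any integral curve $C\not\subset D_1\cup D_2\cup\Sigma$ meets $X\setminus X^\sm$ in only finitely many closed points, and at all others the ambient $X$ is $k$-smooth. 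Since $\dim X\ge 4$ forces $\dim E,\dim E'\ge 3$, Proposition \ref{prop:restr-linking}~\ref{RL-b} then furnishes a purely topological test for $k$-smoothness of $E$ and $E'$ at such points, so the smoothness clause is topological as well.

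Putting these observations together, every clause of the biconditional in Proposition \ref{prop:main-lin-equiv} is topological, so $|X|$ determines linear equivalence between reduced divisors on $X$ and, by the first paragraph, between arbitrary divisors. The only nontrivial input beyond prior sections is the packaging of the smoothness clause via enlargement of $\Sigma$; the rest of the proof is essentially a verification that the criterion of Proposition \ref{prop:main-lin-equiv} has been topologically pre-digested by Sections \ref{sec:ample}--\ref{sec:pencils}.
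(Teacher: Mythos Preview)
Your proof is correct and follows essentially the same approach as the paper: reduce to reduced divisors via Lemma~\ref{lem:reduced-suffice}, then verify that every clause of Proposition~\ref{prop:main-lin-equiv} is topological using Propositions~\ref{prop:ample-crit}, \ref{prop:pencil-algebraic}, \ref{prop:restr-linking}~\ref{RL-b}, and Corollary~\ref{cor:intersection-numbers}, handling the smoothness clause by enlarging $\Sigma$ to contain $X\setminus X^\sm$. The one detail you omit but the paper states explicitly is that applying Proposition~\ref{prop:restr-linking}~\ref{RL-b} to $E$ and $E'$ requires them to be geometrically connected, which follows from the Lefschetz hyperplane theorem \cite{SGA2new}*{XII,~3.5} since they are irreducible ample divisors on the geometrically integral $X$.
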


\begin{proof}[Sketch of proof]
By Lemma \ref{lem:reduced-suffice}, it suffices to show that $\abs{X}$ determines linear equivalence between reduced divisors. For this, we explain why the notions and conditions that appear in the linear equivalence criterion of Proposition \ref{prop:main-lin-equiv} are determined by~$\abs{X}$:
\bitem
\m
Ampleness of irreducible divisors by Proposition \ref{prop:ample-crit};

\m
Algebraicity of topological pencils by Proposition \ref{prop:pencil-algebraic};

\m
The function $D \mapsto D . H^{\dim X - 1}$ up to constant multiple by Corollary \ref{cor:intersection-numbers};

\m
All but finitely many closed points of $C$ lying in $E^\sm$ and $E'^\sm$ by Proposition~\ref{prop:restr-linking}~\ref{RL-b}: to apply it, we choose $\Sigma$ to contain $X \setminus X^\sm$ (then all but finitely many closed points of $C$ lie in $X^\sm$) and we note that $E$ and $E'$ are geometrically connected by the Lefschetz hyperplane theorem \cite{SGA2new}*{XII, 3.5}. \qedhere
\eitem
\epf

In conclusion, we have now described the second reconstruction step promised in~\S\ref{sec:master}:
\[
(\abs{X}, \sim_{\mathrm{s}} \x{of irreducible ample divisors}) \leadsto (\abs{X}, \sim \x{of effective divisors}). 
\]


\section{Recovering the projective variety itself} \label{sec:recover-X}

\textsc{Notation.} \emph{In this section, we let $X$ be a normal, geometrically integral projective variety of dimension $\ge 2$ over an infinite field $k$. }

\medskip

The remaining  step
\[
(\abs{X}, \sim \x{of effective divisors}) \leadsto X
\]
is a result of Lieblich and Olsson presented in their preprint \cite{LO19}. In this step, the ultimate source of reconstruction is  the fundamental theorem of projective geometry 
that characterizes projective spaces combinatorially in terms of properties of incidence between their points and lines---in effect, for projective spaces this theorem reconstructs the full structure of an algebraic variety from axiomatic combinatorial data. It is fascinating that the collinearity relation for points in projective space encodes rich scheme-theoretic structure. This is not quite unexpected, however---after all, one knows that this relation is capable of encoding, for instance, polynomial equations with integer coefficients (see \cite{Laf03}*{p.~iv, Remarques}).

The precise version of such a theorem that Lieblich and Olsson use is as follows, established via an argument that is close to E.~Artin's classical proof.

\begin{theo}[\cite{LO19}*{Thm.~4.4}] \label{thm:VY-LO}
Let $V$ and $V'$ be finite-dimensional vector spaces over infinite fields $k$ and $k'$, respectively, and let 
\[
U \subset \Gr_1(\bP(V))(k) \qxq{and} U' \subset \Gr_1(\bP(V'))(k')
\]
be collections of lines in $\bP(V)$ and $\bP(V')$ given by the sets of $k$-points and $k'$-points of some nonempty Zariski open subsets of the indicated Grassmannians. For any bijection 
\[
b\colon \bP(V)(k) \isomto \bP(V')(k') \qxq{that induces an inclusion} U \hra U',
\]
there are a field isomorphism $\iota \colon k \isomto k'$ and a $\iota$-semilinear isomorphism $V \isomto V'$ such that the induced isomorphism $\bP(V) \isomto \bP(V')$ agrees with $b$ on some Zariski open containing all the lines in $U$. 
\end{theo}

With this theorem in hand, the strategy is to apply it with 
\[
V \ce \Gamma(X, \sO(mH)) \qxq{and} V' \ce \Gamma(X', \sO(mH')) \qxq{for all} m > 0
\]
for a suitable very ample divisor $H \subset X$ and a homeomorphic to $X$ projective variety $X'$ that one seeks to show to be isomorphic to $X$ (with $H'$ being the image of $H$). Indeed, the isomorphisms $V \isomto V'$ then give isomorphisms between the graded components of the coordinate rings that appear in the projective embeddings
\[
X \cong \Proj\p{\bigoplus_{m \ge 0} \Gamma(X, \sO(mH))} \hra \Proj\p{\bigoplus_{m \ge 0} \Gamma(X, \sO(H))^{\tensor m}}
\]
and
\[
X' \cong \Proj\p{\bigoplus_{m \ge 0} \Gamma(X', \sO(mH'))} \hra \Proj\p{\bigoplus_{m \ge 0} \Gamma(X, \sO(H'))^{\tensor m}}\!.
\]
With the help of some additional arguments to make sure that the isomorphisms of the graded pieces are compatible as $m$ varies, one obtains the desired $X \isomto X'$. 

In view of this strategy, the key becomes  defining the subsets of lines $U$ and $U'$ with which to apply Theorem~\ref{thm:VY-LO}. Since $k$ is infinite, a line $\ell \subset \bP(V)$ is spanned by any two of its $k$-points, which correspond to $k^\times$-orbits of sections of $\Gamma(X, \sO(mH))$, that is, to effective divisors on $X$ linearly equivalent to $mH$. The \emph{base locus} of $\ell$ is the locus of common vanishing on $X$ of these effective divisors. Lieblich and Olsson choose $U$ to consist of all the lines that satisfy the following definition (and analogously for $U'$).

\begin{defi}
With $V \ce \Gamma(X, \sO(mH))$ as above, a line $\ell \subset \bP(V)$ is \emph{strongly definable} if there is a closed subset $Z \subset \abs{X}$ of codimension $\ge 2$ such that $\ell$ consists precisely of those effective divisors on $X$ linearly equivalent to $mH$ that contain $Z$.
\end{defi}

The following description of the set of all strongly definable lines is critical for applying Theorem~\ref{thm:VY-LO} to reconstruct the projective variety $X$. 

\begin{lemm}[\cite{LO19}*{Lem.~5.13}]
If the linear system determined by the ample divisor $H \subset X$ is basepoint free, then the strongly definable lines in $\bP(V)$ comprise the set of $k$-points of a nonempty Zariski open of the Grassmannian $\Gr_1(\bP(V))$. 
\end{lemm}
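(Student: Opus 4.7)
The plan is to associate to each line $\ell \subset \bP(V)$ its corresponding 2-plane $W \subset V$ and codimension-2 linear subspace $L_W \subset \bP(V^*)$, and to translate strong definability into a pair of geometric conditions on $L_W$ via the morphism $\phi\colon X \to \bP(V^*)$ given by $V = \Gamma(X, \sO(mH))$, which is well-defined as a morphism because $|mH|$ is basepoint free. Let $Y \ce \phi(X)$; this is an irreducible, linearly non-degenerate closed subvariety of $\bP(V^*)$, since any hyperplane $\{t = 0\}$ containing $Y$ would correspond to $t \in V$ vanishing on $X$, forcing $t = 0$.

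The first substantive step is the translation itself. The base locus of $\ell$ is $B(W) = \phi\i(L_W)$, and for any closed $Z \subseteq |X|$, a section $t \in V$ satisfies $D_t \supseteq Z$ iff $t$ vanishes on $\phi(Z)$ as a set. Any valid witness $Z$ for strong definability satisfies $Z \subseteq B(W)_\red$, and conversely every divisor in $\ell$ contains $B(W)_\red$; so without loss of generality $Z = B(W)_\red$. This gives the equivalence: $\ell$ is strongly definable iff \textup{(i)} $B(W)$ has codimension $\ge 2$ in $X$, and \textup{(ii)} the set $L_W \cap Y = \phi(B(W)_\red)$ linearly spans $L_W$ inside $\bP(V^*)$.

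The second step is to show that conditions \textup{(i)} and \textup{(ii)} cut out a nonempty Zariski open $U \subset \Gr_2(V)$. Nonemptiness of \textup{(i)} for generic $W$ is Bertini applied to $\phi$; nonemptiness of \textup{(ii)} is the classical fact that generic codimension-$r$ linear sections of an irreducible non-degenerate projective variety of dimension $\ge r$ are themselves non-degenerate in the cutting subspace, obtained by iterating the statement that a generic hyperplane section of an irreducible non-degenerate variety is non-degenerate in that hyperplane. Openness of \textup{(i)} is the upper semi-continuity of fiber dimension for the projection of the universal base locus $\sB \subset \Gr_2(V) \times X$ to $\Gr_2(V)$. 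Openness of \textup{(ii)} I would derive by restricting to the Bertini open where the universal intersection $\sL \cap Y \to \Gr_2(V)$ is reduced and equidimensional (on which set-theoretic spanning is cohomological) and then applying semi-continuity to the dimension of the kernel of the restriction map $H^0(\sL, \sO(1)) \to H^0(\sL \cap Y, \sO(1))$, which vanishes exactly when $L_W \cap Y$ spans $L_W$. Combining the two open conditions, the set of strongly definable lines equals $U(k)$ for the nonempty Zariski open $U$ defined by \textup{(i)} and \textup{(ii)}.

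The main obstacle is the interplay between the set-theoretic formulation of strong definability (the witness $Z$ is a subset of $|X|$) and the scheme-theoretic tools natural for openness: $L_W \cap Y$ may be non-reduced, and on non-reduced intersections set-theoretic and scheme-theoretic vanishing of linear forms can differ (as with a hyperplane tangent to a quadric). The cleanest fix is to work inside the Bertini open of $\Gr_2(V)$ where the universal intersection is reduced---so that the two notions coincide and cohomological semi-continuity applies directly---and to verify separately that the failure locus of \textup{(ii)} off this open is also closed, for instance by realizing it as the image in $\Gr_2(V)$ of the proper projection from the closed incidence $\{(W, [t]) \in \Gr_2(V) \times \bP(V) : [t] \notin \bP(W),\ t|_{L_W \cap Y} \equiv 0\}$, thereby avoiding delicate cohomology and base change on a non-flat family at the cost of a Chevalley-style constructibility argument.
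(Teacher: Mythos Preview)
The paper does not prove this lemma itself---it is cited from \cite{LO19} without argument---so there is no proof here to compare against. That said, your translation step contains a genuine error: the implication ``strongly definable $\Rightarrow$ (i)'' fails. Take $X = \bP^1 \times \bP^1$, $V = H^0(X,\sO(2,2))$, let $C \subset X$ be an irreducible curve of bidegree $(2,1)$ with equation $f_C$, and set $W \ce f_C \cdot H^0(X,\sO(0,1))$. Then $B(W) = C$ has codimension $1$, so your condition (i) fails; yet $\ell = \bP(W)$ \emph{is} strongly definable, since the restriction $V \to H^0(C,\sO(2,2)|_C) \cong H^0(\bP^1,\sO(6))$ is surjective with kernel exactly $W$, and hence $7$ general points $Z \subset C$ already give $V_Z = W$. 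Your sentence ``without loss of generality $Z = B(W)_\red$'' silently presupposes that $B(W)_\red$ has codimension $\ge 2$, which is precisely the condition (i) you are trying to deduce.

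The correct characterization is that $\ell$ is strongly definable iff (ii) alone holds: for the missing direction, when $B(W)_\red$ has divisorial components one replaces each such component by a finite set of its points on which evaluation is injective on the relevant finite-dimensional image of sections, thereby manufacturing a codimension-$\ge 2$ witness $Z$. With this correction your openness argument becomes harder rather than easier: you can no longer retreat to the open locus where (i) holds, and the incidence $\{(W,[t]) : [t]\notin\bP(W),\ t|_{L_W\cap Y}\equiv 0\}$ you propose is not obviously closed, because set-theoretic vanishing of $t$ on $L_W\cap Y$ is not a closed condition in $(W,[t])$---new points can enter $L_W\cap Y$ in a limit without $t$ being forced to vanish there. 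So both the characterization and the closure argument need to be reworked.
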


The precise formulation of the reconstruction result that Lieblich and Olsson obtain by carrying out the strategy that we sketched in this section is as follows.

\begin{theo}[\cite{LO19}*{Prop.~6.2.5}]
For normal, geometrically integral, proper varieties $X$ and $X'$ over infinite fields $k$ and $k'$, respectively, if $X$ is projective and $\dim X \ge 2$, then any homeomorphism $\abs{X} \isomto \abs{X'}$ that matches the linear equivalence relations on effective divisors on $X$ and $X'$ underlies a scheme isomorphism $X\isomto X'$. 
\end{theo}

This result achieves the final reconstruction step
\[
(\abs{X}, \sim \x{of effective divisors}) \leadsto X,
\]
and hence completes our sketch of the proof of Theorem~\ref{thm:main}.

\bigskip

\emph{Acknowledgements. } I thank the organizers of the S\'{e}minaire Bourbaki for their kind invitation. I thank J\'{a}nos Koll\'{a}r for patiently answering numerous questions. I thank Vi\stackon[-11pt]{\^{e}}{\d{}}n To\'{a}n H\d{o}c for an invitation to give lectures related to the material presented here.

\begin{bibdiv} \begin{biblist} 
\bibselect{bibliography}
\end{biblist} \end{bibdiv}

\end{document}
